\title{Persistence of Network Synchronization under Nonidentical Coupling Functions}
\author{Daniel M. N. Maia$^{1,2}$
, 
 Elbert E. N. Macau$^{1}$
 , and 
  Tiago Pereira$^{3,4}$
  \\
$^{1}$ Instituto Nacional de Pesquisas Espaciais, S\~ao Jos\'e dos Campos, Brazil \\ 
$^{2}$ Humboldt-Univesit\"at zu Berlin, Germany\\
$^{3}$ Department of Mathematics, Imperial College London, UK \\
$^{4}$ Universidade de S\~ao Paulo, S\~ao Carlos, Brazil.
}
\newtheorem{assumption}{Assumption}
\newtheorem{remark}{Remark}
\newcommand{\R}{\mathbb{R}}
\begin{document}
\maketitle

\begin{abstract}
We investigate the persistence of synchronization in networks of diffusively coupled oscillators  when the coupling functions are nonidentical. Under mild conditions, we uncover the influence of the  network interaction structure on the mismatches of the coupling function. We show that Erd\"os-R\'enyi random graphs support large perturbations in the coupling function. In contrast scale-free graphs do not allow large perturbations in the coupling function, that is, as the network size $n$ goes to infinity  it forces the coupling functions to be identical. 
\end{abstract}

\begin{keywords}networks, synchronization, coupling function, persistence
\\

\textbf{AMS subject classifications.} 34D06, 34D10
\end{keywords}

\section{Introduction}


Recent empirical studies of real complex systems have led to a deep understanding of the structure of networks and of the coupling function. In particular, experimental findings revealed that the interaction between diffusively coupled oscillators can be mediated  by different coupling functions \cite{Aneta2015}. For instance, the cardio-respiratory coupling function can be decomposed into a number of independent functions of a time-varying nature \cite{Stankovski2012}.
Moreover, different time-varying coupling functions can be used in the context of networks with time-varying topology \cite{Stiwell}. The mathematical theory for synchronization in the presence of nonidentical coupling function remains elusive \cite{Pecora,Pereira2014,Pereira2014_1}. A natural question in this context concerns the stability of the synchronized motion for nonidentical coupling functions. 

We  provide some conditions for the persistence of synchronized motion when the coupling functions are nonidentical.We consider undirected, simple and connected networks, see Ref. \cite{Bondy} for details. The dynamics of the  $n$ 
identical oscillators diffusively coupled is described as
\begin{equation}\label{eq:main}
\dot x_i = f(x_i) + \alpha \sum_{j=1}^n A_{ij} H_{ij}(t,x_j - x_i), \quad i=1,\cdots,n
\end{equation}
where $\alpha>0$ is the overall coupling strength, the adjacency matrix $A = (A_{ij})_{i,j=1}^n$ describes the interaction structure of the network ($A_{ij} = 1$ if $i$ is connected to $j$ and $0$ otherwise), the function $f : \mathbb{R}^q \rightarrow \mathbb{R}^q$ describes the isolated node dynamics, and the family of coupling functions $H_{ij} : \mathbb{R}_+ \times \mathbb{R}^q \rightarrow \mathbb{R}^q$ describes an interaction akin to diffusion between nodes $i,j=1,\cdots,n$.  

Consider the mismatches between coupling functions $H_{ij} - H_{kl}$. Our main contribution provides sufficient conditions on the mismatches to guarantee stable synchronization.  Loosely speaking, our results show that \\ 

\begin{itemize}
\item[]-- Erd\"os-R\'enyi networks support large mismatches;
\item[]-- Scale-Free networks forces the mismatches to converge to zero as $n \rightarrow \infty$. \\
\end{itemize} 

In order words, the heterogeneity in the degrees $g_i = \sum_{j=1}^n A_{ij}$ determines the  mismatches size.  
The precise statement  of our results can be found in Sec. \ref{MR} and the numerical results in Sec. \ref{Ill}.

\section{Main Results}\label{MR}

Since our main goal is to study the effect of the coupling function on the synchronization behavior, 
we keep the vector field $f$ identical for all nodes. If the vector field is nonidentical we can use the approach developed in \cite{Pereira2014} to obtain the collective behavior of the model. 

To fix notation, throughout the manuscript we use the norm  $\|  x_i \| = \max_{j}{|x_{ij}|}$, for $x_i = (x_{i1}, \cdots, x_{iq})$.  When dealing with operators we use the induced operator norm. For instance, if $C = (C_{ij})_{i,j=1}^n$ is a matrix
then $\Vert C \Vert = \max_i \sum_{j}|C_{ij}|$. 
When using the Euclidean norm, we represent it as $\Vert\cdot\Vert_2$.
To state our results we proceed with some assumptions on the vector field $f$ and coupling functions $H_{ij}$.

\begin{assumption}\label{assumption_vector_field}
The function $f$ is continuous differentiable  and there exists an inflowing invariant open ball $U \subset \mathbb{R}^q$  with
$$
\| D f (x) \| \le \varrho \, \, \, \, \mbox{  for all  } x \in U  
$$
for some $\varrho > 0$.
\end{assumption}

\begin{assumption}[Coupling Function Perturbation]
We assume that every coupling function can be factorized as
$$
H_{ij}(t,x) = H(x) + \widetilde{P}_{ij}(t,x).
$$
satisfying
\begin{enumerate}\label{assumption_coupling}
\item[2.1] $H_{ij}(t,0) = 0$.
\item[2.2] $H$ is differentiable and $DH(0) = \Gamma$ has eigenvalues $\gamma_i$ satisfying 
\begin{equation}\label{gamma}
\gamma =\gamma(\Gamma)= \min_{1\leq i\leq q} \Re(\gamma_i) >0.
\end{equation}
\item[2.3]\label{assump:23} The perturbations $\tilde{P}_{ij}:\mathbb{R}_{+}\times \R^q \to\mathbb{R}^q$ are continuous  matrices 
satisfying
\begin{eqnarray}
\widetilde{P}_{ij}(t,x) &=& P_{ij}(t) x \nonumber \\
\sup_{t > 0, x \in U} \| P_{ij}(t) \|  &\leq& \delta \quad \mbox{for all } i,j \in\{1,\cdots,n\}.  \label{delta}
\end{eqnarray}
\end{enumerate}
\end{assumption}

Because of the diffusive nature of the coupling, if all oscillators start with the same initial condition, then the coupling term vanishes identically. This ensures that the globally synchronized state 
$x_i(t) = s(t)$ for all $i = 1,2, \dots, n$ 
is an invariant state for all coupling strengths $\alpha$ and all choices of coupling functions $H_{ij}$. 
We call the subset 
$$
\mathcal{S}:=\{ {x}_i \in U\subset \mathbb{R}^q \mbox{ for }  i \in \{1, \dots, n\} : x_1=\cdots = x_n \}
$$
the synchronization manifold. The local stability of $\mathcal{S}$ is determined by the spectral properties of the 
combinatorial Laplacian $L$. Consider the diagonal matrix $D = \operatorname{diag}(g_1,\cdots,g_n)$ where
again $g_i = \sum_{j=1}^n A_{ij}$
denotes the degree of the vertex $i$. Then the Laplacian matrix reads $L = D - A$. As we are considering symmetric networks, the eigenvalues of $L$ are all real and they  can be arranged of the form 
$$0=\lambda_1<\lambda_2\le \cdots \le \lambda_n.$$
The  second eigenvalue $\lambda_2$, known as algebraic connectivity of the graph, plays an important role in the stability analysis of the synchronization manifold. Our main result determines the perturbation size $\delta$ 
(in Assumption \ref{assumption_coupling}) in terms of the network structure. All proofs of the following results are placed at the Appendix \ref{appendix_proofs}. \\

\begin{theorem}[Persistence]\label{theo:persistence}
Consider the model in Eq. \eqref{eq:main} satisfying Assumptions \ref{assumption_vector_field} and  \ref{assumption_coupling} on a connected network. Then, there exists constants $\eta = \eta(f,\Gamma)$ and $K = K(\Gamma)$ such that for all coupling strengths satisfying
\begin{equation}\label{eq:condition1}
\alpha > \frac{\eta}{\lambda_2 \gamma}
\end{equation} 
and perturbations of the coupling function satisfying
\begin{equation}\label{eq:persistentcondition}
\delta < \frac{\lambda_2 \gamma - \eta/\alpha}{ K\Vert L\Vert},
\end{equation}
where $\gamma$ is given by Eq. (\ref{gamma}) and
$\lambda_2 = \lambda_2(L)$ is the algebraic connectivity,
the synchronization manifold is locally exponentially stable. That is, there exist constants $\rho_0 >0$ and $C>0$
such that
if $\| x_j(t_0) - x_i(t_0) \|_2 \le \rho_0$ for all $i,j=1,\cdots,n$, then 
$$
\|  x_j(t) - x_i(t) \|_2 \le Ce^{-\nu (t-t_0)}\| x_j(t_0) - x_i(t_0) \|_2,
$$    
for all $t\geq t_0$ and all $i,j=1,\cdots,n$, where
\begin{equation}\label{eq_nu}
\nu  = \alpha\lambda_2 \gamma -\eta - \delta K \| L \|>0.
\end{equation}
\end{theorem}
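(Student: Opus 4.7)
The plan is to linearize the system around an arbitrary synchronous reference trajectory $s(t)$ satisfying $\dot s = f(s)$, decouple the transverse modes using the spectral decomposition of $L$, and combine a Gronwall-type decay estimate for each mode with a uniform bound on the nonidentical-coupling perturbation. Setting $\xi_i(t) = x_i(t) - s(t)$, expanding $H(y) = \Gamma y + O(\|y\|^2)$ at the origin by Assumption \ref{assumption_coupling}, and using the identity $\sum_j A_{ij}(\xi_j-\xi_i) = -\sum_j L_{ij}\xi_j$, the variational equation takes the block form
\begin{equation*}
\dot\xi = \bigl[I_n\otimes Df(s(t)) - \alpha\,L\otimes\Gamma\bigr]\xi + \alpha\,\mathcal E(t)\xi + R(t,\xi),
\end{equation*}
where $R$ is quadratic in $\xi$ and the block matrix $\mathcal E(t)$ inherits the sparsity of $L$. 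The uniform bound $\|P_{ij}(t)\|\le\delta$ together with this sparsity gives $\|\mathcal E(t)\|\le\delta\|L\|$ in the max norm adopted in the paper; this combinatorial estimate is the bridge between the spectral hypothesis on $\Gamma$ and the network structure.

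Since $L$ is symmetric, I would diagonalize $L = O\Lambda O^T$ with $\Lambda=\operatorname{diag}(0,\lambda_2,\ldots,\lambda_n)$ and introduce coordinates $\eta = (O^T\otimes I_q)\xi$. The mode $\eta_1$ is tangent to $\mathcal S$, while the transverse modes $\eta_k$, $k\ge 2$, satisfy
\begin{equation*}
\dot\eta_k = \bigl[Df(s(t)) - \alpha\lambda_k\Gamma\bigr]\eta_k + \alpha\bigl(\widetilde{\mathcal E}(t)\eta\bigr)_k + \widetilde R_k(t,\eta),
\end{equation*}
and local exponential stability of $\mathcal S$ is equivalent to uniform exponential decay of $(\eta_k)_{k\ge 2}$. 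Because $\gamma(\Gamma)>0$, a standard near-Jordan normal-form argument yields $\|e^{-t\Gamma}\|\le Ke^{-\gamma t}$ for some $K=K(\Gamma)$. Treating $Df(s(t))$, bounded by $\varrho$ on $U$, as a time-dependent perturbation of $-\alpha\lambda_k\Gamma$ and applying Gronwall's lemma to $\tfrac{d}{dt}\bigl(e^{\alpha\lambda_k\Gamma t}\eta_k\bigr)$ gives
\begin{equation*}
\|\Phi_{\lambda_k}(t,t_0)\|\le K\,e^{-(\alpha\lambda_k\gamma-\eta)(t-t_0)}
\end{equation*}
for the principal solution $\Phi_{\lambda_k}$ of the unperturbed $k$th mode, with $\eta=\eta(f,\Gamma)$ absorbing the product $K\varrho$.

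Using $\lambda_k\ge\lambda_2$, this gives a uniform decay rate $\alpha\lambda_2\gamma-\eta$ on the transverse subspace, positive exactly when \eqref{eq:condition1} holds. Reinstating the perturbation $\alpha\widetilde{\mathcal E}(t)\eta$ through variation of constants and a second Gronwall application subtracts a term proportional to $\delta K\|L\|$ from this decay rate, yielding $\nu$ as in \eqref{eq_nu}; hypothesis \eqref{eq:persistentcondition} is exactly what makes $\nu>0$. The quadratic remainder $R(t,\xi)=O(\|\xi\|^2)$ is dominated by the exponential term on a sufficiently small ball of radius $\rho_0$, promoting linear to local nonlinear exponential stability. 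Because $x_j-x_i = \xi_j-\xi_i$ lies entirely in the transverse subspace, the exponential decay transfers to pairwise differences, and the conversion between the max norm used in the analysis and the Euclidean norm in the statement produces the constant $C$.

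The main obstacle I anticipate is the Gronwall bound on the nonautonomous propagator $\Phi_{\lambda_k}(t,t_0)$: one needs the constants $K$ and $\eta$ to depend only on $\Gamma$ and on $(f,\Gamma)$ respectively (and in particular not on the network), so that \eqref{eq:condition1} and \eqref{eq:persistentcondition} truly scale with the algebraic connectivity $\lambda_2$ and with the Laplacian norm $\|L\|$. This uniformity is what makes the contrast between Erd\"os--R\'enyi and scale-free networks, discussed later in the paper, a genuine consequence of the network structure.
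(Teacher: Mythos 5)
Your proposal is correct and follows essentially the same route as the paper: the same block variational equation with the key estimate $\|\mathcal E(t)\|\le\delta\|L\|$, the same orthogonal diagonalization of $L$ giving a network-independent constant $K$, an unperturbed transverse decay rate $\alpha\lambda_2\gamma-\eta$, and a perturbation step that degrades this rate by a term of order $\delta K\|L\|$. The only difference is presentational: where you carry out both steps directly via variation of constants and Gronwall, the paper delegates the unperturbed dichotomy estimate to Theorem 1 of Ref.~\cite{Pereira2014} and the perturbation step to Coppel's roughness lemma (Lemma \ref{pro_perturbed_node}), which are the same arguments packaged as citations.
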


One challenge to be overcome in the proof of the above result is to show that the constant $K$ is indenpendend of the network size. Hence, the 
network contribution can be factored in terms of the spectral gap $\lambda_2$ and the spectral radius via $\| L\|$. This is only true for undirected networks. For digraphs, $K$ can depend badly on the network size $n$ and no persistence result may be possible.

Notice that the parameter $\nu$ in Eq. \eqref{eq_nu} provides the decaying rate towards synchronization.
The perturbation slows down the synchronization by a factor proportional to $\delta$, in other words,
with perturbations, the transient time towards synchronization is longer.   Moreover,
we have the following: \\
\begin{remark}
If $\widetilde{P}_{ij}$ are nonlinear operators satisfying
$$\Vert \widetilde{P}_{ij}(t,x)\Vert \leq M \Vert x\Vert^{1+c}$$ 
for
some uniform constant $M>0$ and any $c>0$, then $\delta$ can be taken arbitrarily small, that is, 
nonlinear perturbations do not affect the decay rate.
\end{remark} \\

The Theorem \ref{theo:persistence}  ensures that the solutions of Eq. \eqref{eq:main} with initial conditions in an open neighborhood of the synchronization manifold are attracted to it uniformly and exponentially fast. Moreover, it ensures that there are no synchronization loss or bubbling bifurcations
\cite{Rubin,RViana}.

The persistent condition Eq. \eqref{eq:persistentcondition} relates the size of the mismatch to the network structure. We will explore the relation between network structure and $\delta$ in the Corollaries \ref{cor:ER} and \ref{cor:BA}.  
We  relate $\delta$ to the graph structure for two important examples of complex  networks: \\

\begin{itemize}
\item[]--
{\it Homogeneous networks}, where the disparity in the node degrees is small. A paradigmatic example 
is the Erd\"os-R\'enyi (ER) random network: Starting with $n$ nodes the graph is constructed by connecting nodes randomly. 
Each edge is included in the graph with probability $p$ independent from every other edge. If
$p \gg \log n / n$ then all degrees are nearly the same \cite{ER_paper}. \\

\item[]--
{\it Heterogeneous networks}, where a few nodes are massively connected (theses nodes are called \textit{hubs}) while most of the nodes have only a few connections. A typical example of such networks is the Barabási-Albert (BA) random tree. To construct the graph we start with a single edge. Then at each step, we start a new edge from one of the nodes created so far to a new node. The starting node is chosen at random in such a way that the probability to choose a given node is proportional to its degree. \\
\end{itemize} 

Illustrations of Erd\"os-R\'enyi (ER) random networks (homogeneous) and Barab\'asi-Albert (BA) Scale-Free networks (heterogeneous) can be seen in Figure \ref{fig:BA}.
These graphs are random so we want to characterize events in the large network limit.  We say that an event holds asymptotically almost surely if the probability tend to $1$ as $n\rightarrow \infty$. \\

\begin{figure}[!ht]
\centering
\includegraphics[scale=0.45]{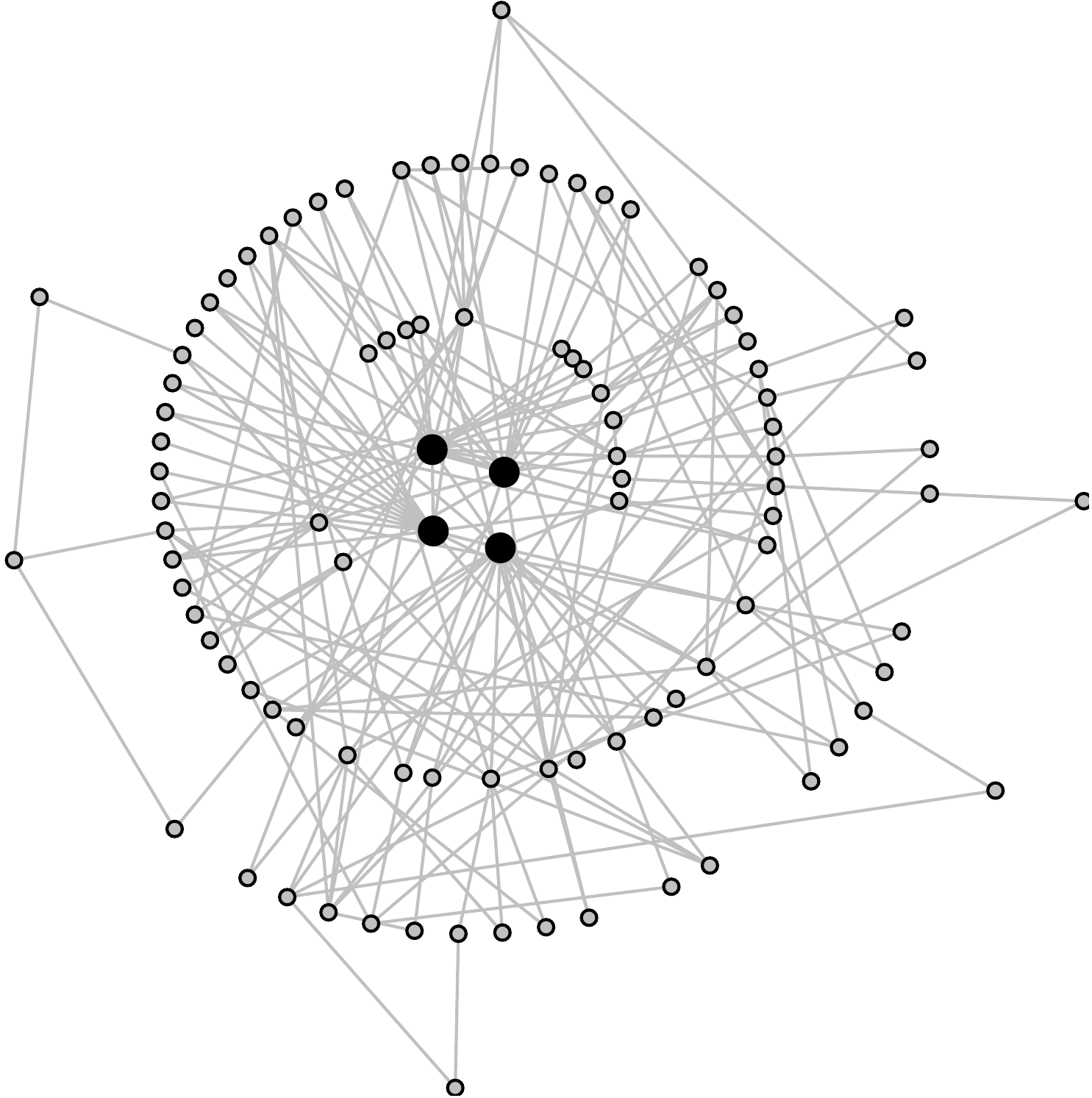}\qquad
\includegraphics[scale=0.3]{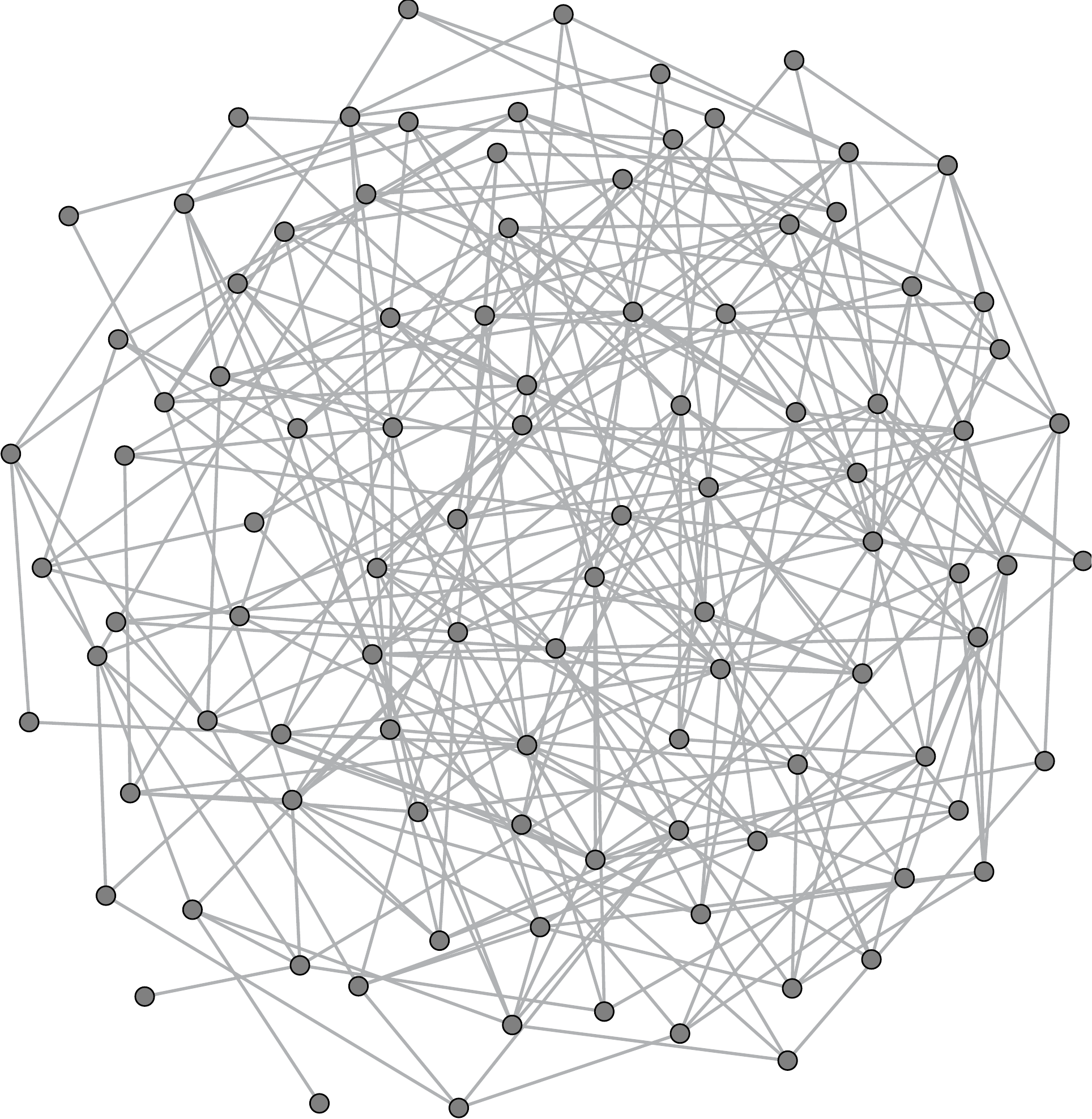}
\caption{Illustrations of a BA network (left) and an ER network (right), both with $n=100$. $BA$ networks have a high heterogeneity in the node's degrees as represented by the four bold nodes. While the mean degree of such BA network is around 4 each of the bold nodes have over 25 connections.  
}
\label{fig:BA}
\end{figure}

\begin{corollary}[ER Networks]\label{cor:ER}
Consider the Theorem \ref{theo:persistence} for an ER network with $p \gg \log n / n$.
Then  asymptotically almost surely exists $K_0 = K_0(\Gamma)$ such that in limit of large coupling parameter $\alpha$ and network size $n$
the perturbation condition \eqref{eq:persistentcondition} reduces to
\begin{equation}
\delta < K_0.
\end{equation}
That is, the perturbation size on ER networks can always be bounded away from zero. Hence, 
ER networks support large mismatches in the coupling function. 
\end{corollary}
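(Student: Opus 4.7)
My plan is to feed the known asymptotics of the Laplacian spectrum for dense enough Erdős--Rényi graphs into the persistence bound \eqref{eq:persistentcondition}, and then take $\alpha \to \infty$ so that the $\eta/\alpha$ term in the numerator disappears. The claim is that the ratio $\lambda_2/\|L\|$ has an $n$-independent limit, which combined with large $\alpha$ makes \eqref{eq:persistentcondition} collapse to $\delta < K_0$ with $K_0 = K_0(\Gamma)$.

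First I would compute $\|L\|$ explicitly. Since $L = D - A$ has nonnegative diagonal $g_i$ and off-diagonal entries $-A_{ij} \in \{-1,0\}$, the max-row-sum definition of the operator norm yields $\|L\| = \max_i \sum_j |L_{ij}| = 2\Delta$, where $\Delta := \max_i g_i$. So the right-hand side of \eqref{eq:persistentcondition} can be rewritten as $(\lambda_2\gamma - \eta/\alpha)/(2K\Delta)$.

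Next I would control $\Delta$ and $\lambda_2$ in the regime $p \gg \log n / n$. For the maximum degree, a standard Chernoff bound applied to each $g_i \sim \mathrm{Binomial}(n-1,p)$ together with a union bound over $i$ shows that, asymptotically almost surely, $\Delta = (1+o(1))np$ (the assumption $p \gg \log n/n$ is precisely what makes the union bound succeed with any prescribed polynomial rate). For the algebraic connectivity, I would invoke the spectral concentration for Laplacians of ER graphs in the same density regime, which yields that a.a.s.\ $\lambda_2(L) = (1+o(1))np$; this is standard and follows from matrix Bernstein / Oliveira--Tropp type bounds combined with Weyl's inequality (or equivalently from Kolokolnikov--Osting--von Brecht). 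Therefore a.a.s.\ $\lambda_2/\|L\| \to 1/2$.

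Putting these ingredients together, a.a.s.\ the bound \eqref{eq:persistentcondition} reads
$$
\delta < \frac{\gamma}{2K}\bigl(1+o(1)\bigr) - \frac{\eta}{2\alpha K \Delta},
$$
and taking $\alpha$ large so that the second term is negligible gives $\delta < K_0$ with $K_0 := \gamma/(2K)$, independent of $n$. The step that requires the most care is the spectral concentration $\lambda_2 = (1+o(1))np$; the upper bound on $\Delta$ is an elementary Chernoff estimate, but the lower bound on $\lambda_2$ genuinely needs either a matrix-concentration theorem for the centred Laplacian or an appeal to an existing ER spectral result. Everything else is bookkeeping around the inequality \eqref{eq:persistentcondition}.
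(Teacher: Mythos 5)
Your proposal is correct and follows essentially the same route as the paper: take $\alpha\to\infty$ to kill the $\eta/\alpha$ term, write $\|L\|=2\max_i g_i$, and show that $\lambda_2/\|L\|$ tends a.a.s.\ to a positive $n$-independent constant using degree concentration and a spectral estimate for $\lambda_2$ (the paper cites Kolokolnikov--Osting--von Brecht near the connectivity threshold where you invoke general matrix-concentration results, but this is the same idea). Your limiting ratio $\lambda_2/\|L\|\to 1/2$ and hence $K_0=\gamma/(2K)$ is actually the internally consistent value; the paper drops the factor of $2$ from $\|L\|=2\max_i g_i$ when it states $\lambda_2/\|L\|=1+o(1)$, though this is immaterial since the corollary only asserts the existence of some $K_0=K_0(\Gamma)$.
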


In contrast to
homogeneous networks, heterogeneous networks such as BA networks  will support increasingly smaller mismatches in the large $n$-limit. \\

\begin{corollary}[BA Networks]\label{cor:BA}
Consider the Theorem \ref{theo:persistence} for an BA Scale-Free network.
Then  asymptotically almost surely
 there exists $K_1 = K_1(\Gamma)$ such that in limit of large coupling parameter $\alpha$ and network size $n$
the perturbation condition \eqref{eq:persistentcondition} reduces to
\begin{equation}\label{eq_BA_networks}
\delta < {K_1}{n^{-1/2}}.
\end{equation}
\end{corollary}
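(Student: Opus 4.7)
The plan is to specialize Theorem \ref{theo:persistence} to the BA setting by analyzing the two network-dependent quantities that appear in the persistence bound, namely the algebraic connectivity $\lambda_2$ and the operator norm $\| L\|$, in the large-$n$ limit.

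First, I would take $\alpha\to\infty$ in the condition \eqref{eq:persistentcondition} of Theorem \ref{theo:persistence}. Since $\eta$ and $K$ depend only on $f$ and $\Gamma$, the term $\eta/\alpha$ becomes negligible, so asymptotically the perturbation size is governed by
$$
\delta \;<\; \frac{\lambda_2\,\gamma}{K\,\|L\|}.
$$
The goal then reduces to showing that, a.a.s.\ for a BA random tree, one has $\lambda_2/\|L\|\gtrsim n^{-1/2}$. Setting $K_1 = c\gamma/K$ for the implicit constant $c$ will yield \eqref{eq_BA_networks}.

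Next I would control $\|L\|$ via the maximum degree $g_{\max}=\max_i g_i$. With our choice of norm, $L=D-A$ gives $\|L\| = \max_i \sum_j |L_{ij}| = 2g_{\max}$. The key probabilistic input is the classical result on preferential attachment (Bollob\'as--Riordan and subsequent refinements) that for a BA Scale-Free network one has $g_{\max} = \Theta(\sqrt{n})$ a.a.s. Thus $\|L\| = \Theta(\sqrt{n})$ a.a.s., contributing the $n^{-1/2}$ factor to the bound.

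Finally, I would argue that $\lambda_2$ is bounded below by a positive constant independent of $n$, a.a.s. For BA networks this follows from spectral results relating $\lambda_2$ to Cheeger-type isoperimetric constants of the attachment graph, which remain uniformly positive with high probability; alternatively, for $m\ge 2$ one can invoke the known spectral gap estimates for preferential attachment graphs. Combining these three ingredients yields $\lambda_2/\|L\|\gtrsim n^{-1/2}$ a.a.s., and the corollary follows.

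The main obstacle is the lower bound on $\lambda_2$: unlike the maximum degree estimate, which is a routine consequence of preferential attachment, showing that the algebraic connectivity of a BA graph remains bounded away from zero a.a.s.\ requires nontrivial spectral input. The upper bound on $\|L\|$ via $2g_{\max}$ and the resulting $\sqrt{n}$ scaling is straightforward once the degree concentration result is available, and the passage $\alpha\to\infty$ in \eqref{eq:persistentcondition} is purely algebraic.
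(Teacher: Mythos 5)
Your handling of $\alpha\to\infty$ and of $\|L\|$ is fine and matches the paper: with the max-row-sum norm $\|L\|=2g_{\max}$, and the paper invokes M\'ori's theorem ($n^{-1/2}g_{\max}\to\mu$ a.s.) to get $\|L\|=\Theta(n^{1/2})$. The problem is your treatment of $\lambda_2$, which goes in the opposite direction from what the corollary needs and rests on a false claim. The corollary, as the paper states and uses it, is a \emph{negative} result: the sufficient condition \eqref{eq:persistentcondition} \emph{forces} $\delta$ to be at most of order $n^{-1/2}$, i.e.\ BA networks do not tolerate large mismatches. To prove that, one must bound $\lambda_2\gamma/(K\|L\|)$ from \emph{above}, which the paper does by combining Fiedler's inequality $\lambda_2\le \tfrac{n}{n-1}g_{\min}$ with the fact that a BA network has bounded minimum degree $m_0$ independent of $n$; hence $\lambda_2$ is bounded above by a constant and $\lambda_2/\|L\|=O(n^{-1/2})$. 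You instead set out to prove $\lambda_2/\|L\|\gtrsim n^{-1/2}$, which would establish the different (converse) statement that $\delta<K_1 n^{-1/2}$ \emph{suffices}; that is not what the paper's argument delivers, nor what the surrounding discussion (``BA networks force the perturbation to decay'') asserts.

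Moreover, the key step you flag as the main obstacle --- a constant lower bound on $\lambda_2$ a.a.s.\ --- is actually false for the BA model as defined in the paper, which is a random \emph{tree}. Any tree containing a long path has small algebraic connectivity (a piecewise-linear test function along the path gives $\lambda_2 = O(1/D^2)$ up to normalization, with $D$ the diameter), and the diameter of a BA tree grows like $\log n/\log\log n$, so $\lambda_2\to 0$ a.a.s. The expander-type results you allude to concern the \emph{normalized} Laplacian of preferential-attachment graphs with $m\ge 2$; they do not yield a constant lower bound on the combinatorial $\lambda_2$, since the conversion reintroduces the degrees. The fix is simple: replace your lower bound on $\lambda_2$ by Fiedler's upper bound $\lambda_2\le\tfrac{n}{n-1}g_{\min}\le$ const, and conclude $\delta<\lambda_2\gamma/(K\|L\|)\le K_1 n^{-1/2}$ with $K_1=K_1(\Gamma)$.
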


Now, turning back to Theorem \ref{theo:persistence}, if further information is given on the behavior  of the perturbations $P_{ij}(t)$ the bounds can be improved, that is, even if $\Vert P_{ij}(t)\Vert$ is large, synchronization can be attained. For instance,  consider that the perturbations $P_{ij}(t)$ are taken from a family $\mathcal{P}$ of periodic matrix functions of one parameter $\omega$ (called frequency) and with mean value zero. For the sake of simplicity, lets omit the indexes $ij$ in the next definition. \\
\begin{definition}[Fast Oscillation]\label{def_fast_oscillation}
We say that $P(t) \in \mathcal{P}$ oscillates fast if for any $c,h>0$ there is a
frequency $\omega_0=\omega_0(c,h)$ such that for all $\omega>\omega_0$ then
\begin{equation}\label{eq_fastlimit}
\left\Vert \int_{t_1}^{t_2} P(\omega t)dt \right\Vert \leq c \quad \mbox{for any } t_1 < t_2 < t_1 + h.
\end{equation}
\end{definition}

For this class of perturbation, synchronization is attained and the effect of $P_{ij}(t)$
can be neglected even it is large in magnitude.\\

\begin{theorem}[Fast Limit]\label{theo:fastlimit}
Consider the model in Eq. \ref{eq:main} satisfying Assumptions \ref{assumption_vector_field} 
and \ref{assumption_coupling}. Regardless the values of  $\sup_{t}\Vert P_{ij}(t)\Vert\leq \delta$,
if the perturbations $P_{ij}(t)\in \mathcal{P}$ oscillates fast enough
then for all $\alpha$ satisfying Eq. \eqref{eq:condition1},
the synchronization manifold is locally exponentially stable. Moreover, the decaying rate towards 
synchronization is not affected.
\end{theorem}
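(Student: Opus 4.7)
The plan is to adapt the linearized-stability analysis used for Theorem \ref{theo:persistence}, but instead of bounding the perturbation via its $L^\infty$ size $\delta$, I would exploit its small \emph{time-averaged} size via an integration-by-parts (Bogolyubov-type averaging) argument. Let $s(t)$ denote a synchronous trajectory. Projecting the variational dynamics onto the transverse subspace to the synchronization manifold (in the basis that diagonalizes the Laplacian $L$) produces a block system of the form
\begin{equation*}
\dot{u} = \bigl[I \otimes Df(s(t)) - \alpha L_\perp \otimes \Gamma\bigr]\, u + \alpha R(t)\, u,
\end{equation*}
where $L_\perp$ is the restriction of $L$ to the transverse space and $R(t)$ is the operator assembled from the $P_{ij}(t)$. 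From the $\delta = 0$ instance of Theorem \ref{theo:persistence}, the evolution operator $\Phi(t,\tau)$ of the unperturbed transverse equation satisfies $\Vert\Phi(t,\tau)\Vert \leq C e^{-\nu_0(t-\tau)}$ with $\nu_0 = \alpha \lambda_2 \gamma - \eta > 0$ whenever $\alpha$ satisfies \eqref{eq:condition1}.

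Variation of parameters then gives
\begin{equation*}
u(t) = \Phi(t,t_0)\, u(t_0) + \alpha \int_{t_0}^{t} \Phi(t,\tau)\, R(\tau)\, u(\tau)\, d\tau.
\end{equation*}
The key step is to use Definition \ref{def_fast_oscillation}: since each entry of $R$ lies in the fast-oscillating class $\mathcal{P}$, the antiderivative $Q(\tau) := \int_{t_0}^{\tau} R(\sigma)\, d\sigma$ is uniformly small, $\sup_{\tau \geq t_0} \Vert Q(\tau)\Vert \leq c$, provided the common frequency $\omega$ is taken above the threshold $\omega_0(c,h)$ for a sufficiently large window $h$ (for periodic mean-zero matrices this uniform bound is automatic). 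Integration by parts in the variation-of-parameters formula yields
\begin{equation*}
\int_{t_0}^{t} \Phi(t,\tau)\, R(\tau)\, u(\tau)\, d\tau = \Phi(t,\tau)\, Q(\tau)\, u(\tau)\Big|_{\tau=t_0}^{\tau=t} - \int_{t_0}^{t} \partial_\tau\bigl[\Phi(t,\tau)\, u(\tau)\bigr]\, Q(\tau)\, d\tau,
\end{equation*}
and using $\partial_\tau \Phi(t,\tau) = -\Phi(t,\tau)[I\otimes Df(s(\tau)) - \alpha L_\perp \otimes \Gamma]$ together with the ODE for $u$, the derivative factor is bounded by a quantity of order $\varrho + \alpha \Vert L\Vert\,\Vert \Gamma\Vert + \alpha \Vert R\Vert$. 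Each resulting term therefore carries a factor $\Vert Q\Vert \leq c$, which can be made arbitrarily small by taking $\omega_0$ large enough.

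Finally, I would close the estimate by a Gronwall argument: choosing $c$ small enough that the effective coefficient on $\Vert u(\tau)\Vert$ under the integral is strictly less than $\nu_0$, the integral inequality yields $\Vert u(t)\Vert \leq C' e^{-\nu_0 (t-t_0)} \Vert u(t_0)\Vert$, so the decay rate is indeed unaffected by the perturbation. The nonlinear remainders (higher-order terms from $f$ and from the diffusive coupling) are absorbed by restricting to a sufficiently small neighborhood of the synchronization manifold, exactly as in the proof of Theorem \ref{theo:persistence}. The main obstacle I anticipate is precisely this closing step: after the integration by parts the integrand still involves $u(\tau)$ multiplied by prefactors that scale with $\alpha$, $\Vert L\Vert$ and the global bounds on $Df$ and $\Gamma$, so one must carefully track these dependencies and show that they can be absorbed by shrinking $c$ — which in turn dictates the threshold $\omega_0$ in Definition \ref{def_fast_oscillation}.
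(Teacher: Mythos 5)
Your proposal is correct and follows essentially the same route as the paper: the paper also starts from the unperturbed transverse dichotomy $\Vert\Phi(t,\tau)\Vert\le Ke^{-(\alpha\lambda_2\gamma-\eta)(t-\tau)}$ and then invokes a roughness lemma for fast-oscillating, zero-mean periodic perturbations (adapted from Coppel's Proposition 6), which yields $\Vert\Psi(t,t_0)\Vert\le(1+c)Ke^{(-\nu+\epsilon)(t-t_0)}$ with $\epsilon$ made smaller than $\nu$ by taking $\omega$ large. Your variation-of-parameters, integration-by-parts and Gronwall argument is precisely the proof of that cited lemma, so you have simply unpacked the black box rather than taken a different path.
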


\section{Illustrations}\label{Ill}

We present in this section two illustrations for the presented results. The first
illustration  is a simple but rich illustration that
covers Theorems \ref{theo:persistence} and \ref{theo:fastlimit} 
and the second illustration approaches Corollaries \ref{cor:ER} and \ref{cor:BA}.
For both illustrations, we make use of the Lorenz system
 \begin{equation}\label{eq:lorenzsystem}
 \begin{array}{ccc}
 \dot{x}_{i1} & = & 10 (x_{i2}-x_{i1})\\
 \dot{x}_{i2} & = & x_{i1}(28-x_{i3}) -x_{i2}\\
 \dot{x}_{i3} & = & x_{i1}x_{i2} - (8/3)x_{i3}
 \end{array},
 \end{equation}
as the dynamics for the individual node $x_i(t) = (x_{i1}(t),x_{i2}(t),x_{i3}(t))\in\R^3$.

The Lorenz system has an absorbing domain, that is,  there is a compact subset $U\subset \R^3$ to which the solutions of Eq. \eqref{eq:lorenzsystem} will converge \cite{Sparrow}. Therefore, the solutions of this system exist globally and Assumption \ref{assumption_vector_field} follows. Moreover, inside $U$ the system \eqref{eq:lorenzsystem} is chaotic for the chosen parameters \cite{Viana}. If the coupling parameter $\alpha$ is larger than the critical coupling 
Eq. \eqref{eq:condition1} the Lorenz systems will synchronize and have a chaotic dynamics.

For the perturbed coupling
functions we set 
$$
H_{ij}(t,x ) = x +P_{ij}(t)  x,
$$
such that $H_{ij}$ is a perturbation of the identity, where 
\begin{equation}\label{eq_perturbations_operators}
P_{ij}(t) = \delta\cos( t)\operatorname{R}_{ij}, \quad i,j \in \{1,\cdots,n \}
\end{equation}
where $\operatorname{R}_{ij}$ is  random matrix picked independently from  an orthogonal Gaussian ensemble for each $i$ and $j$ normalized according to  $\Vert \operatorname{R}_{ij}\Vert=1$. By construction, $\sup_t\Vert P_{ij}(t)\Vert = \delta$
for every $i$ and $j$, which agrees with Assumption \ref{assumption_coupling} and makes $\delta$ a perturbation control parameter.

We numerically integrate Eq. \eqref{eq:main} using the sixth order Runge-Kutta method with fixed integration step for all illustrations that follows. The initial conditions for each vector state $x_i(0)$ were, also for all experiments that follows, 
$x_i(0)=  (-7,-10,5) + \varepsilon_i$ where
 $\varepsilon_i$ is a random variable in the interval $(0,0.1)$ with a uniform distribution.

\subsection{Synchronization Tongue}

Lets consider the first illustration, namely, when $n=2$ (two coupled oscillators).
Our  Theorem \ref{theo:persistence} provides
a {\it synchronization tongue}.
The persistence condition \eqref{eq:persistentcondition} is of the form
\begin{equation}\label{eq_theoretical_delta}
\delta < c_1 - c_2/\alpha
\end{equation} 
 where $c_1 = \lambda_2\gamma/(K\Vert L\Vert)$ and $c_2 = \eta/(K\Vert L\Vert)$.
 
Considering the Lorenz system \eqref{eq:lorenzsystem} and perturbations according Eq. \eqref{eq_perturbations_operators}
we perform the numerical computation of Eq. \eqref{eq:main} for
combinations of parameters $\alpha$ (coupling) and $\delta$ (perturbation) and compute
the synchronization error $ \Vert x_2(t) - x_1(t)\Vert $. 

We regard the first integration time $\tau =1000$ as a transient and discard it. Then the next $T = 2000$
we compute the mean synchronization error
$$
E(\alpha,\delta) = \frac{1}{T-\tau} \int_{\tau}^T \| x_2(t) - x_1(t) \| dt.
$$
Moreover, for each fixed  $\alpha$ and $\delta$ we average $E$ over ensemble of initial conditions (20 distinct 
initial conditions chosen uniformly as discussed above). We denote this averaged synchronization error by 
$E_a$. We use the triple $(\alpha,\delta,E_a)$ to produce a color map
where the color level represents the synchronization error $E_a$ for the combination $(\alpha,\delta)$.

The color map is depicted in Figure \ref{fig:colormap1} and we call it 
synchronization tongue because of its particular shape.
Note that if $\delta = 0$ (no perturbation at all) we find that
for  $\alpha>0.5$ the Lorenz oscillators synchronize.  
Using the theoretical Equation \eqref{eq_theoretical_delta}  and using the data provided by the numerical simulation one obtain $\delta < 8 - 4/\alpha$. This equation is drawn in Figure \ref{fig:colormap1}
as a yellow solid line.
 
\begin{figure}[!ht]
\centering
\includegraphics[width=10cm,height=7cm]{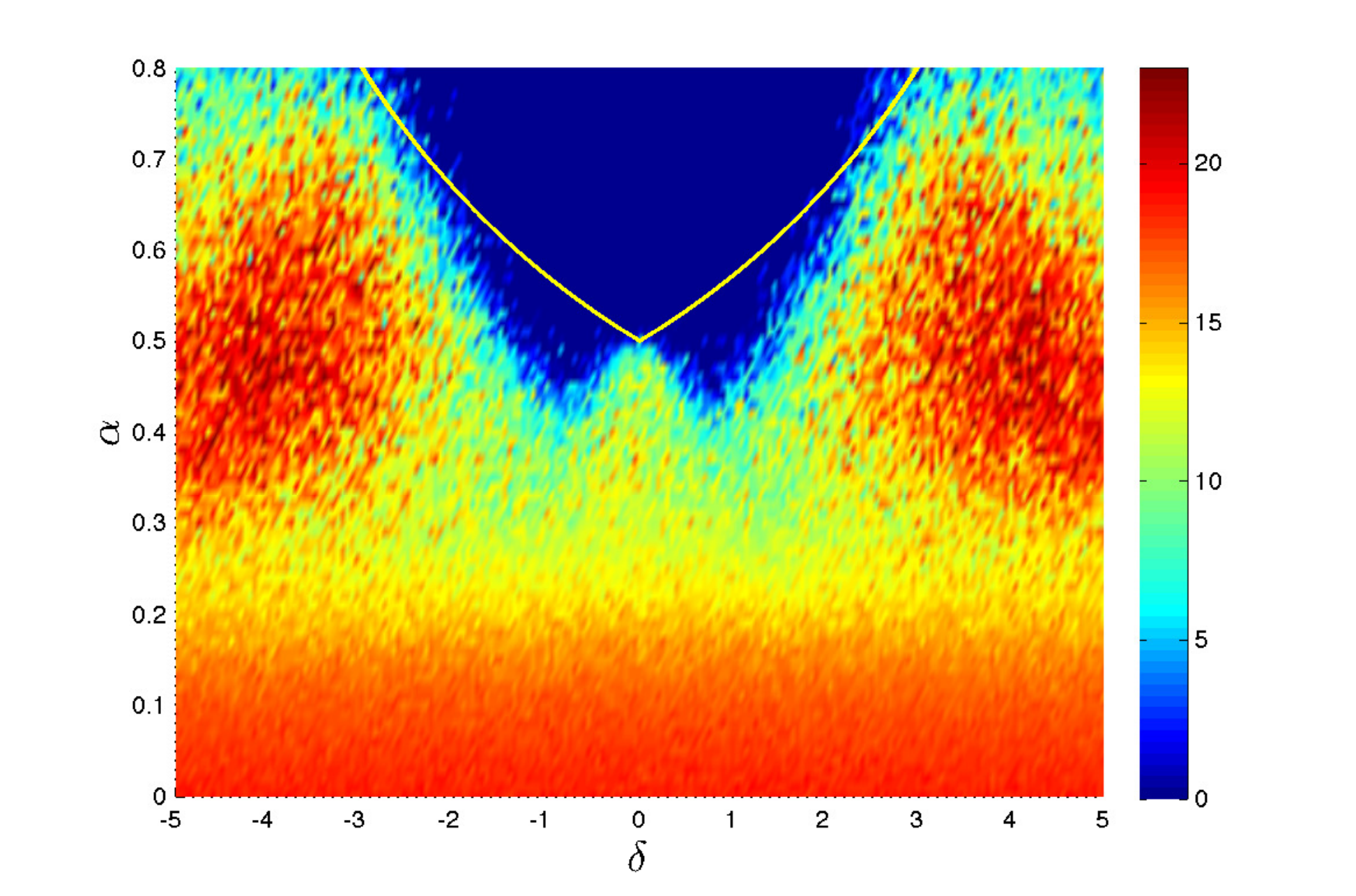}
\caption{Color map for the mean synchronization error.
It has a total of $201(\mbox{horizontal})\times 101(\mbox{vertical})  = 20301$ 
points.
The color scale represents the mean synchronization 
error $E_a$.  The yellow curve represents the theoretical bound
\eqref{eq:persistentcondition} with outcome fitting $\delta = 8 - 4/\alpha$.}\label{fig:colormap1}
\end{figure}

\subsection{ Fast Oscillations}

Let us illustrate the behavior of the synchronization tongue when perturbations oscillates very fast. To this end we 
consider 
\begin{equation}\label{eq_perturbations_operators_w}
P_{ij}(t) = \delta\cos( \omega t)\operatorname{R}_{ij}, \quad i,j \in \{1,\cdots,n \}
\end{equation}
with $\operatorname{R}_{ij}$ chosen as in the previous example. In the limit $\omega \gg1$
the perturbations	
that we are considering (Eq. \eqref{eq_perturbations_operators_w})
fulfills the fast oscillation condition (Definition \ref{def_fast_oscillation}).

Indeed, it easy to compute
$$
\left\Vert \int_{t_1}^{t_2} P_{ij}(\omega t)dt\right\Vert =
 \left\Vert \int_{t_1}^{t_2}  \delta\cos(\omega t) \operatorname{R}_{ij} dt \right\Vert =
|\delta| \left\vert  \frac{ \sin (t_2\omega) - \sin(t_1\omega)}{\omega}\right\vert \leq
\frac{2|\delta|}{\omega}
$$
for any $t_2>t_1$.
So, for any $c>0$ there is an $\omega_0=2|\delta |/c$  so that for every $\omega>\omega_0$
we have $$\left\Vert \int_{t_1}^{t_2} P_{ij}(\omega t)dt\right\Vert \leq c \quad \mbox{ for any } t_2 > t_1.$$

The color map of the Figure \ref{fig:colormap1} was produced using
the perturbations in Eq. \eqref{eq_perturbations_operators_w} with $\omega= 1$.
Now, from Theorem \ref{theo:fastlimit}, we know that in the large limit
of $\omega$ the synchronization tongue in Figure \ref{fig:colormap1}
will flatten at the level $\alpha = 0.5$, which is the level that produces synchronization in a scenario of no perturbations
in coupling function ($\delta=0$).
The Figure \ref{fig:colormaps2} shows the numerical results of this property.

\begin{figure}[!ht]
\centering
\includegraphics[width=10cm,height=7cm]{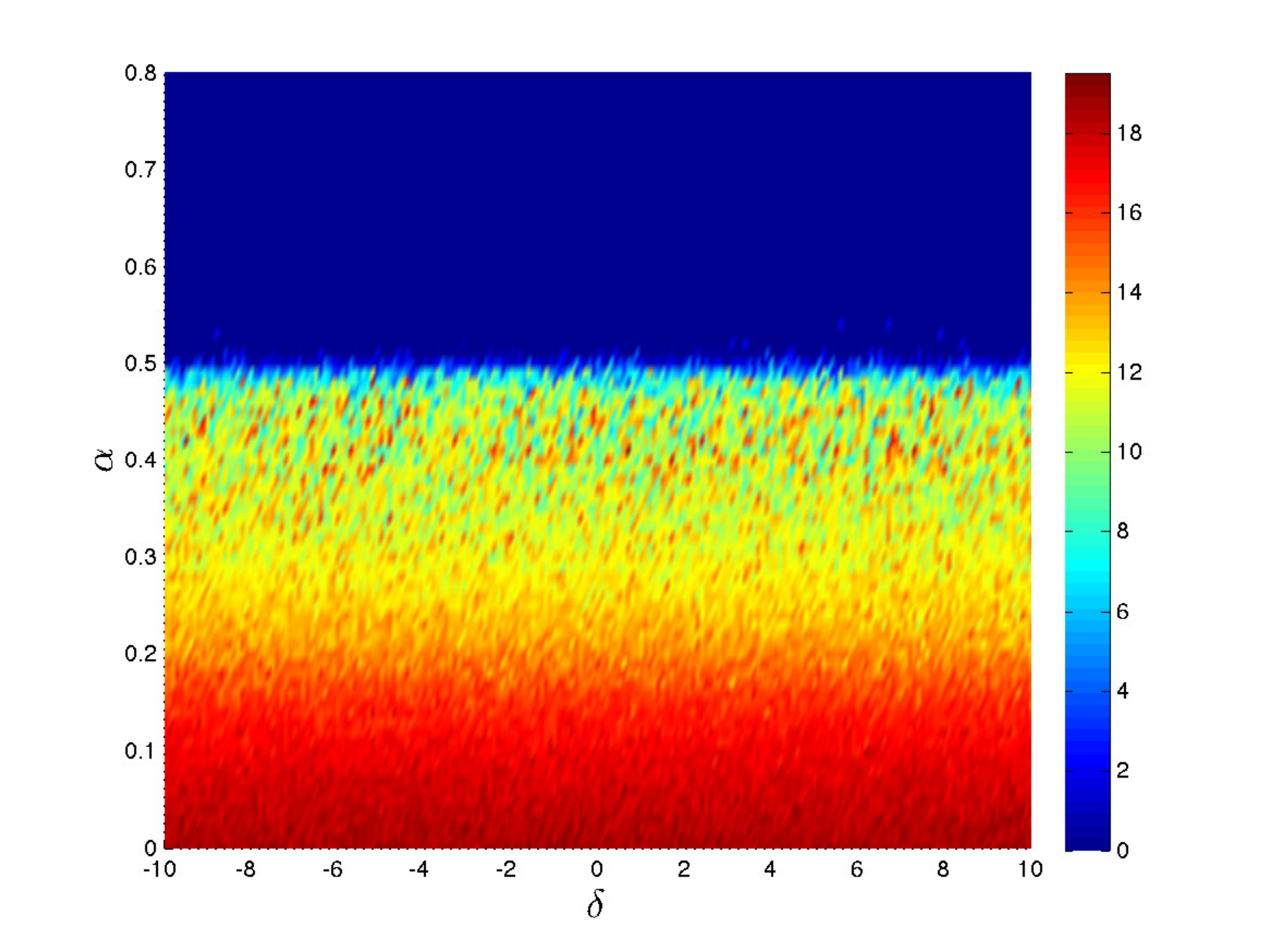}
\caption{Color map for the mean synchronization error for the fast oscillating coupling with $\omega=1000$. 
The synchronization tongue flattens out meaning that coupled oscillators do not feel the oscillating perturbation in the coupling.} 
\label{fig:colormaps2}
\end{figure}

\subsection{Network size effects  on Perturbations}

Corollaries \ref{cor:ER} and \ref{cor:BA} predict interesting system size effects on the perturbations. Here, we wish to illustrate these effects. Hence, we perform numerics experiments considering connected networks with
$n$ nodes, the Lorenz system (Eq. \eqref{eq:lorenzsystem}) as model of isolated dynamics, and coupling
functions accordingly Eq. \eqref{eq_perturbations_operators_w} with $\omega = 1$. 

We determine the effect of the network size $n$ on the perturbation norm $\delta$ as follows. 
For each fixed network size $n$ we start the problem with large coupling $\alpha$ such that the 
system synchronizes at $\delta = 0$ (no perturbation). Then we increase $\delta$ in Eq. \eqref{eq_perturbations_operators_w}. When synchronization is lost at a given $\delta_{\max}$  we stop the simulation.  That is, the value  $\delta_{\max}$ stands for the maximal perturbation value 
that the network synchronization can bare before being destroyed
for any $\delta>\delta_{\max}$. We consider that the synchronization is lost when the synchronization
mean error $E_a > 10$.


Again, we perform  numerical simulations using the sixth order Runge-Kutta method
to evolve the dynamics of Eq. \eqref{eq:main} using $\alpha =5 \gg \eta/(\lambda_2\gamma)$.
For each fixed $n$ we use the fixed
step size $\Delta\delta = 0.01$ to increase the values of $\delta$. For each fixed value of $\delta$
we let the Eq. \eqref{eq:main} evolve for a 
transient time $\tau = 1000$ and then we  compute the synchronization mean error $E_a$ as before.
\newline

\noindent
{\bf ER networks:} We generated ER networks\footnote{We used the software Network Workbench (NWB) to generate all networks used in this paper. NWB is free and it is available at \url{www.nwb.cns.iu.edu}.}
 with fixed probability $p=0.3$ so that the
assumptions in Corollary \ref{cor:ER} holds for $n$ large enough. Our numerical simulations show excellent agreement with  Corollary \ref{cor:ER} -- the effect of the network size $n$ on $\delta$ is nearly constant for
large $n$ and $\delta_{\max}$ is always bounded
always from zero, which can be seen in Figure \ref{fig:ER} where $\log(n) = \log_{10}(n)$.

\begin{figure}[!ht]
\centering
\includegraphics[scale=0.5]{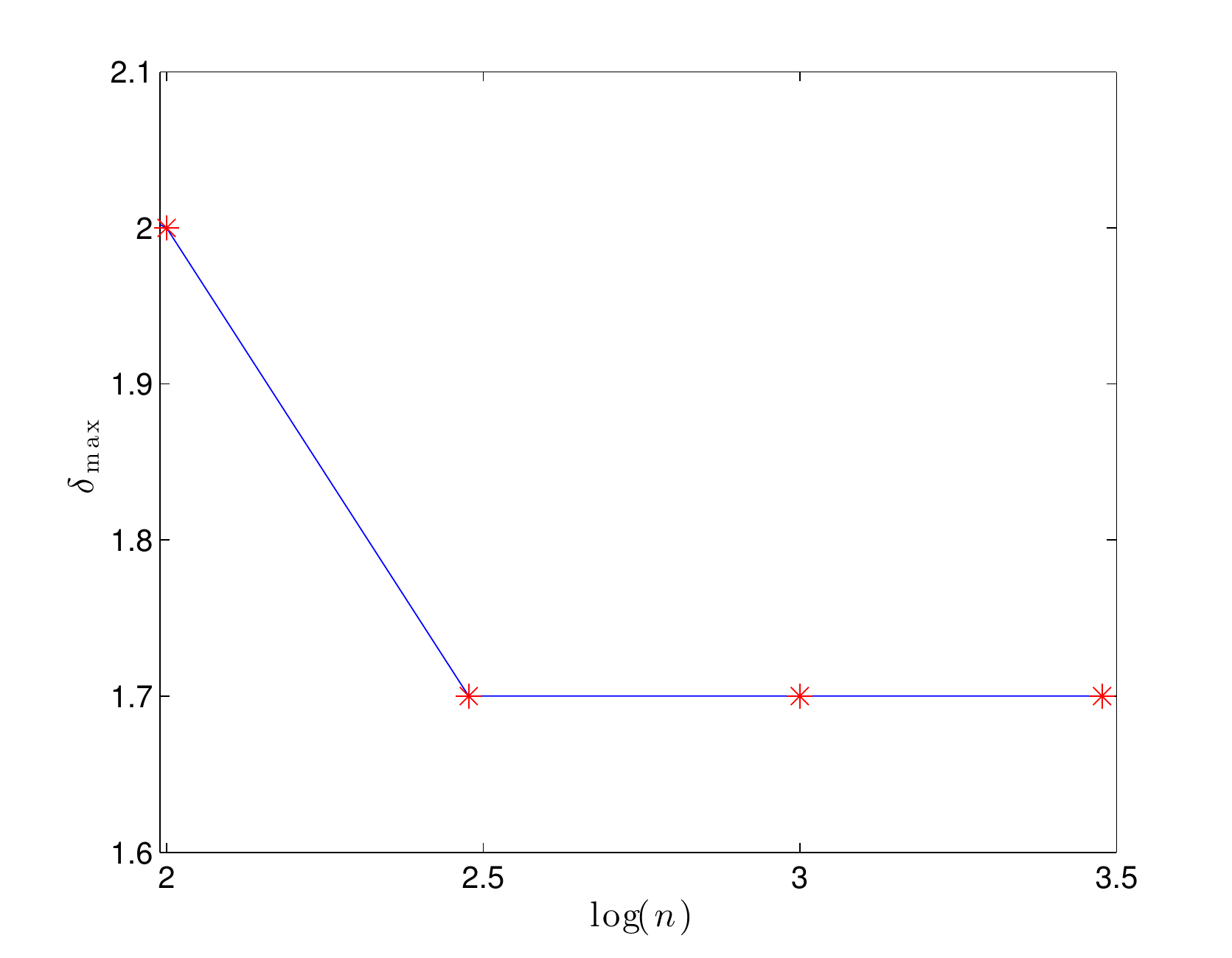}
\caption{Effect of the network size on perturbation size $\delta$ for Erd\"os-R\'enyi random networks. In the large $n$-limit the 
ER network tolerates large mismatches. Moreover, the size of the mismatch does not decrease with the network size as predicted.  The values of $\delta_{\max}$ 
have been rounded to second decimal place.}
\label{fig:ER}
\end{figure}

%
%
%

\noindent
{\bf BA Scale-Free networks:} Corollary \ref{cor:BA} says that, if one perturb the coupling function, it will be hard to synchronize Barab\'asi-Albert Scale-Free networks
in the large limit of $\alpha$ and $n$ because in this case we have $$\delta < O(n^{-1/2}).$$ 
To check this prediction we generated 
BA Scale-Free networks
with parameter
$m_0=2$ (links set by new node) unchanged for each network of size $n$.
The effect of $n$ on $\delta$, as expect from Corollary \ref{cor:BA} can be observed in Figure
\ref{fig_loglog} where again, $\log = \log_{10}$.
\begin{figure}[!ht]
\centering
\includegraphics[scale=0.5]{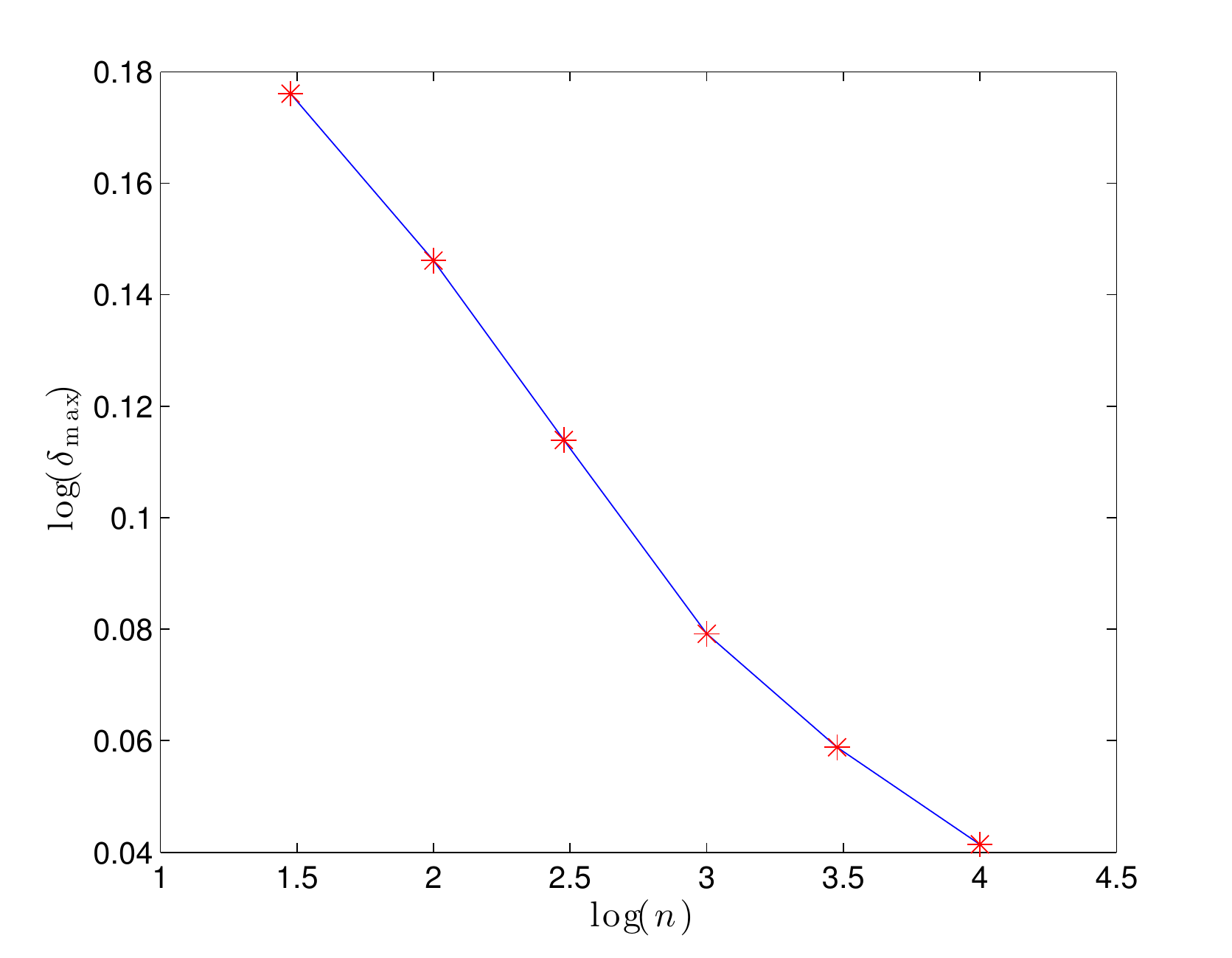}
\caption{Effect of the network size on the perturbation size $\delta$ for BA Scale-Free networks. As predicted in the large $n$-limit BA networks do not tolerate large perturbation. In fact, it forces to perturbation to decay.}
\label{fig_loglog}
\end{figure}

Performing a fitting in the data of the Figure \ref{fig_loglog} we obtain  
$$
\delta_{\max} \propto n^{-\beta}, \mbox{ with }  \beta \approx 0.1.
$$

Our conditions on $\delta$ for the persistence of the network synchronization are sufficient conditions. 
Regarding BA networks, our experiments have showed that the actual decay may be 
slower than the predicted one, that it, slower than $n^{-1/2}$. 

It remains an open question how to obtain conditions that are sufficient and necessary. 
This experiments suggest that the relation between sufficient and necessary conditions may be non-trivial.

\section{Acknowledgments}

DMNM was partially supported by CAPES; TP thanks FAPESP grant 15/08958-4
and EENM thanks FAPESP grant 2011/50151-0 and CNPq.
We are in debt with Paulo R.C. Ruffino for the valuable discussions.

\Appendix

\section{The Proofs}\label{appendix_proofs}

The proof of Theorem \ref{theo:persistence} is given in terms of perturbation theory, 
in particular the roughness of exponential dichotomies by
combining results from Refs. \cite{Pereira2014}  and \cite{coppel}. 
More precisely, firstly we consider the case $\delta=0$ and we use results from \cite{Pereira2014}
to give conditions on $\alpha$. Then we consider the case when $\delta\neq 0$
via the roughness of exponential dichotomies.
The Theorem
\ref{theo:fastlimit} is also a corollary of Theorem \ref{theo:persistence} when
considering the Fast Oscillation Definition \ref{def_fast_oscillation}.

We start with some auxiliary results. The results of the following Lemma \ref{pro_perturbed_node}
are valid for any operator norm.

\begin{lemma}\label{pro_perturbed_node}
Suppose $A(t)$ is a bounded continuous matrix function on an interval $J$ and the evolution operator $\Phi$
of the equation
\begin{equation}\label{eq:non_perturbed_equation}
\dot{X} = A(t)X
\end{equation}
satisfies the inequality
\begin{equation}\label{eq:evolution_operator_non_perturbed_equation}
\Vert \Phi(t,t_0)\Vert \leq K e^{-\nu(t-t_0)} \quad \mbox{for } t\geq t_0.
\end{equation}
If $B(t)$ is a continuous matrix function such that $\Vert B(t) \Vert \leq \theta$ for all $t\in J$ then the
evolution operator $\Psi$ of the perturbed equation
\begin{equation}\label{eq:perturbated_equation}
\dot{Y} = [A(t)+B(t)]Y
\end{equation} 
satisfies the inequality
\begin{equation}\label{eq_inequality_alpha_eta_etc}
\Vert \Psi (t,t_0)\Vert \leq Ke^{\beta(t-t_0)} \quad \mbox{for } t\geq t_0,
\end{equation}
with $\beta = -\nu + \theta K$.
\end{lemma}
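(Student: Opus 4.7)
The plan is to combine a variation of constants representation with the integral form of Gronwall's inequality. I would begin by expressing the perturbed evolution operator $\Psi$ in terms of the unperturbed fundamental solution $\Phi$ via
\[
\Psi(t,t_0) = \Phi(t,t_0) + \int_{t_0}^{t} \Phi(t,s)\,B(s)\,\Psi(s,t_0)\,ds.
\]
This identity is verified by direct differentiation in $t$: the right-hand side equals $I$ at $t=t_0$ and satisfies $\dot Y = [A(t)+B(t)]Y$, so by uniqueness of the linear Cauchy problem it coincides with $\Psi(t,t_0)$. The step uses only that $\Phi$ is the fundamental solution of \eqref{eq:non_perturbed_equation} and that $B(t)$ is continuous on $J$.

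Next, I would take the operator norm on both sides and apply the hypothesized bound \eqref{eq:evolution_operator_non_perturbed_equation} together with $\|B(s)\|\le\theta$ to get
\[
\|\Psi(t,t_0)\| \;\le\; K e^{-\nu(t-t_0)} + K\theta\int_{t_0}^{t} e^{-\nu(t-s)}\,\|\Psi(s,t_0)\|\,ds.
\]
Multiplying through by $e^{\nu(t-t_0)}$ and introducing the auxiliary scalar function $u(t) := e^{\nu(t-t_0)}\|\Psi(t,t_0)\|$ converts the estimate into the standard Gronwall form
\[
u(t) \;\le\; K + K\theta\int_{t_0}^{t} u(s)\,ds.
\]
The integral Gronwall inequality then yields $u(t)\le K\,e^{K\theta(t-t_0)}$, and rearranging gives exactly \eqref{eq_inequality_alpha_eta_etc} with $\beta=-\nu+K\theta$.

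The only step requiring any care is the variation of constants representation, since the claim is formulated for an arbitrary operator norm; but this is a norm-free ODE identity and no special property of the chosen norm enters. Everything afterward is a textbook application of Gronwall. I expect the paper's proof to follow precisely this route, likely with a direct reference to Coppel's monograph \cite{coppel}, where the same estimate (the classical roughness bound for exponential estimates) is established in identical form.
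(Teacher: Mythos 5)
Your argument is correct and complete: the variation-of-constants identity, the norm estimate, and the Gronwall step all go through exactly as written, yielding $\beta=-\nu+\theta K$. The paper itself gives no proof of this lemma, merely noting that it is classical and citing Coppel, whose argument is precisely the one you reproduce, so there is nothing to add.
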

The Lemma \ref{pro_perturbed_node} is classical and and its proof can be found \cite{coppel}.

Next, we wish to obtain variational equations for perturbations in a vicinity of the 
synchronization manifold. To this end, we write the solutions of the Eq. \eqref{eq:main} in the block form 
$$X(t) = \operatorname{vec}[x_1(t),x_2(t),\cdots,x_n(t)]\in\R^{nq}$$
where $\operatorname{vec}$ denotes the 
stacking of the columns of $[x_1(t),x_2(t),\cdots,x_n(t)]$ into one long vector \cite{Lancaster1985}.
Likewise,
one can rewrite the whole vector field as $$F(X) = \operatorname{vec}[f(x_1),f(x_2),\cdots,f(x_n)]$$ where
$F:\mathbb{R}^{nq} \to \mathbb{R}^{nq}$. 
Near to the synchronization manifold, one can rewrite the vector solution $X(t)\in\R^{nq}$ as
\begin{equation}\label{eq_X_spanned}
X(t)  = \mathbf{1}\otimes s(t) + \xi(t)
\end{equation}
where the vector $\mathbf{1}=(1,\cdots,1)\in\R^n$
is the eigenvector  of the laplacian matrix $L$ associated with the eigenvalue $0$, $\otimes$ stands
for the Kronecker product, $s(t)$ is the synchronous solution satisfying $\dot{s} = f(s)$
and $\xi(t)=\operatorname{vec}[\xi_i(t),\cdots,\xi_n(t)]\in\R^{nq}$ is a perturbation of the synchronized state. 

Consider the Eq. \eqref{eq:main} with
linearizations in the vector field and coupling function near to the synchronous manifold, that is, writing
$x_i(t) = s(t) + \xi_i(t)$ we have
\begin{equation}\label{eq_perturbed_linearized_model}
 \dot{\xi}_i = Df(s)\xi_i + \alpha\sum_{j=1}^n A_{ij} (\Gamma + P_{ij}(t))(\xi_j-\xi_i) + R(\xi), \quad i=1,\cdots,n
\end{equation}
where $Df(s(t))$ is the Jacobian matrix of the isolated vector field $f$ along the synchronous solution $s(t)$
and $R:\R^{nq}\to\R^q$ is such that $\Vert R(\xi)\Vert = O(\Vert \xi\Vert^2)$ stands for the Taylor remainder of the expansions of the vector field and the coupling function. 
As we are considering
the local stability of the synchronous solution we regard $\Vert R(\xi)\Vert$ being so small that we can neglect it.
Then, putting Eq.  \eqref{eq_perturbed_linearized_model} in the block form, 
the following Lemma \ref{lemma_xi_equation} holds.

\begin{lemma}\label{lemma_xi_equation}
Near to the synchronous manifold the first variational equation of $\xi$ is
\begin{equation}\label{eq_xi}
\dot{\xi} = [\operatorname{I}_{n}\otimes Df(s(t)) - \alpha (L\otimes \Gamma) + \alpha P(t)]\xi
\end{equation}
where $P:\mathbb{R}\times\mathbb{R}^{nq}\to\R^{nq}$ satisfies
\begin{equation}
\Vert P(t)\Vert \leq \Vert L \Vert \delta
\end{equation}
where $\delta$ is given accordingly Assumption \ref{assumption_coupling}.
\end{lemma}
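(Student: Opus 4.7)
The plan is to obtain the variational equation by simply recasting the componentwise system \eqref{eq_perturbed_linearized_model} in Kronecker/block form and then identifying the residual term as the operator $P(t)$. Writing $\xi=\operatorname{vec}[\xi_1,\dots,\xi_n]$, the node-level piece $\{Df(s)\xi_i\}_{i=1}^{n}$ assembles by definition of $\operatorname{vec}$ into $(I_n\otimes Df(s(t)))\xi$. For the unperturbed diffusive coupling, I would use the identity $\sum_j A_{ij}(\xi_j-\xi_i)=-\sum_j L_{ij}\xi_j$ (since $L=D-A$ and the row sums of $L$ are zero). Multiplying by $\Gamma$ and stacking yields the term $-\alpha(L\otimes\Gamma)\xi$. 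The remaining contribution from the mismatches $P_{ij}(t)$ is, block by block,
\begin{equation*}
(P(t)\xi)_i \;=\; \sum_{j=1}^{n} A_{ij}\, P_{ij}(t)\,(\xi_j-\xi_i),
\end{equation*}
which serves as the definition of the operator $P(t)$ in the statement.

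The remaining task is the bound $\|P(t)\|\leq\|L\|\delta$ in the operator norm induced by $\|\xi\|=\max_i\|\xi_i\|$. Using Assumption~\ref{assumption_coupling} in the form $\|P_{ij}(t)\|\leq\delta$, I would estimate
\begin{equation*}
\|(P(t)\xi)_i\| \;\leq\; \sum_{j=1}^n A_{ij}\|P_{ij}(t)\|\,(\|\xi_j\|+\|\xi_i\|)\;\leq\; 2\delta\, g_i\,\|\xi\|.
\end{equation*}
Taking the maximum over $i$ gives $\|P(t)\|\leq 2\delta\, g_{\max}$. To close the argument I would then observe that, with the $\max$-row-sum norm adopted in the paper, $\|L\|=\max_i\sum_j|L_{ij}|=\max_i(g_i+g_i)=2g_{\max}$, because $L_{ii}=g_i$ and $\sum_{j\neq i}|L_{ij}|=\sum_{j\neq i}A_{ij}=g_i$. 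Substituting gives exactly $\|P(t)\|\leq\|L\|\delta$.

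Finally, I would briefly justify dropping the higher-order remainder $R(\xi)$: the lemma concerns the first variational equation, and as noted just before the statement, the Taylor remainder from both $f$ and $H_{ij}$ satisfies $\|R(\xi)\|=O(\|\xi\|^2)$, so it contributes nothing to the linear part. The only subtle step is the identification $\|L\|=2g_{\max}$ in the chosen norm; this is the one place where the specific choice of the $\max$-norm on $\R^q$ (and hence on the blocks) is used decisively, and it is what makes the bound come out with a clean factor of $\|L\|$ rather than a less informative quantity like $g_{\max}$ or a constant times $\|A\|$. I expect no serious obstacle beyond this bookkeeping, since each step is a direct translation of definitions.
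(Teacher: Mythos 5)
Your proposal is correct and follows essentially the same route as the paper: recast the componentwise linearization \eqref{eq_perturbed_linearized_model} in Kronecker block form, identify the mismatch contribution $\sum_j A_{ij}P_{ij}(t)(\xi_j-\xi_i)$ as the operator $P(t)$, and bound its induced max-norm row by row using $\Vert P_{ij}(t)\Vert\leq\delta$ together with $\Vert L\Vert = 2\max_i g_i$. The only cosmetic difference is that the paper writes out the two block matrices (the off-diagonal part $-L_{ij}P_{ij}$ and the block-diagonal part $\sum_j L_{ij}P_{ij}$) explicitly before taking norms, whereas you estimate directly from the action on a stacked vector; both yield the same bound $2\delta g_{\max}=\Vert L\Vert\delta$.
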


\begin{proof}[Proof of Lemma \ref{lemma_xi_equation}]
We will omit the dependency in $t$ of $P_{ij}(t)$ for the sake of simplicity.
Using the fact that the elements of the laplacian matrix reads $L_{ij} = \delta_{ij}g_i-A_{ij}$, where $\delta_{ij}=1$
if $i=j$ and $0$ otherwise,
the network model \eqref{eq_perturbed_linearized_model} reads
\begin{align}\label{eq_aux_diagonal_terms}
\dot{\xi}_i  & =Df(s)\xi_i + \alpha\sum_{ j=1}^n (\delta_{ij}g_i -L_{ij}) (\Gamma + P_{ij})(\xi_j-\xi_i).
\end{align}
Note that all  diagonal terms $\delta_{ij}g_i$ vanishes because when $i=j$ then $\xi_j-\xi_i=0$. Therefore,
 Eq. \eqref{eq_aux_diagonal_terms} can be written in terms of the laplacian matrix and we have
\begin{align}
\dot{\xi}_i & = Df(s)\xi_i - \alpha\sum_{j=1}^n L_{ij} \Gamma(\xi_j-\xi_i) - \alpha\sum_{ j=1}^n L_{ij} P_{ij}(\xi_j-\xi_i) \nonumber \\
& = Df(s)\xi_i - \underbrace{\alpha\sum_{j=1}^n L_{ij} \Gamma(\xi_j)}_{I} +\underbrace{\alpha\sum_{j=1}^n L_{ij} \Gamma (\xi_i)}_{=0}
- \underbrace{\alpha\sum_{ j=1}^n L_{ij} P_{ij}(\xi_j)}_{II} + \underbrace{\alpha\sum_{ j=1}^n L_{ij} P_{ij}(\xi_i)}_{III}.\label{eq_braces}
\end{align}
In the block form, the each portion $Df(s)\xi_i$ stands for the $i$-th block of 
$[\operatorname{I}_n\otimes Df(s)]\xi$.
It is easy to see that the portion $I$ of the Eq. \eqref{eq_braces} stands for the $i$-th block of
$(L\otimes\Gamma)\xi$. 
For the portion $II$, note that it stands for the $i$-th block of
\begin{equation}\label{eq_portion2}
- \alpha\begin{pmatrix}
L_{11}P_{11} & L_{12}P_{12} & \cdots & L_{1n}P_{1n}\\
L_{21}P_{21} & L_{22}P_{22} & \cdots & L_{2n}P_{2n}\\
\vdots & \vdots & \cdots & \vdots \\
L_{i1}P_{i1} & L_{i2}P_{i2} & \cdots & L_{in}P_{in}\\
\vdots & \vdots & \cdots & \vdots \\
L_{n1}P_{n1} & L_{n2}P_{n2} & \cdots & L_{nn}P_{nn}\\
\end{pmatrix}_{nq\times nq} \xi.
\end{equation}
For the portion $III$, note that it stands for the $i$-th block of
\begin{equation}\label{eq_portion3}
\alpha\begin{pmatrix}
\sum_{j=1}^nL_{1j}P_{1j} & 0 &  \cdots  & 0 \\
0 & \ddots & \cdots & 0\\
\vdots & \vdots & \sum_{j=1}^nL_{ij}P_{ij} & \vdots \\
\vdots & \vdots &  \cdots & \ddots \\
0 & 0 & \cdots & \sum_{j=1}^nL_{nj}P_{nj}
\end{pmatrix}_{nq\times nq} \xi.
\end{equation}

Therefore, adding up the all the portions we end up with the first variational equation for $\xi$ (Eq. \eqref{eq_xi})
where $P(t)$ is a Laplacian-like big perturbation matrix.

Regarding the computation of $\Vert P(t)\Vert$ we have
\begin{align*}
\Vert P(t)\Vert  = 2\max_i \left\Vert \sum_{j=1,j\neq i}^n L_{ij} P_{ij}(t) \right\Vert
 \leq \left( 2\max_i \sum_{j=1,j\neq i}^n  | L_{ij} | \right) \sup_t \Vert P_{ij}(t) \Vert 
 = \Vert L \Vert \delta
\end{align*}
where $\sup_t \Vert P_{ij}(t) \Vert  \leq \delta$ accordingly Assumption \ref{assumption_coupling}.
\qquad
\end{proof}

\subsection{The Proof of Theorem \ref{theo:persistence} (Persistence)}

The aim now is to give conditions on $\alpha$
so that the trivial solution $\xi(t) = {0}$ of Eq. \eqref{eq_xi} is exponentially stable. 
This can be achieved in terms of exponential dichotomies. The case when $P(t) = 0$ was already studied
in Ref. \cite{Pereira2014}.

Now, we split the proof of Theorem \ref{theo:persistence} into two steps. In Step 1, we check that the assumptions
of our Theorem \ref{theo:persistence}
satisfies the hypothesis of Theorem 1 in Ref. \cite{Pereira2014} (when $P(t)=0$) we also discuss the dichotomy parameters and in Step 2 we use the persistence
Lemma \ref{pro_perturbed_node} to conclude the result. 

\textbf{Step 1 :}  ({\it Estimates on Dichotomy parameters})
Lets consider the case with no perturbation on the coupling
function, that is, $P(t) = 0$. 
As we said, this case was already studied in Ref. \cite{Pereira2014}. For completeness we discuss the main steps. Consider the variational equation
$$
\dot{\varphi} = [\operatorname{I}_{n}\otimes Df(s(t)) - \alpha (L\otimes \Gamma)]\varphi
$$
Since, $L$ is undirected it also a diagonal representation $L = R^{-1} \Lambda R$. 
In this setting the change of coordinates
$$
\varphi = [R \otimes I_q]^{-1} \zeta 
$$
block diagonalizes the variational equation
$$
\dot{\zeta} = \bigoplus_{i} (Df(s(t)) - \alpha \lambda_i \Gamma) \zeta,
$$
and since $\varphi$ is not parallel to the synchronization manifold, the eigenvalue $\lambda_1 = 0$ does not 
contribute to the evolution of $\zeta$. In Ref. \cite{Pereira2014} it was shown that under Assumption 1 if one defines 
$$\sigma = \min_{1\leq i\leq q,\, 2\leq j\leq n}\Re (\lambda_j \gamma_i ) >0,$$
and  consider the  
coupling strength threshold given by
\begin{equation}\label{eq_theo1_aux_alpha}
\alpha> \dfrac{\eta}{\sigma}
\end{equation}
then
$$
\| \Phi_{\zeta}(t,t_0)\|_2 \le K e^{-(\alpha \sigma - \eta) (t - t_0)},
$$
where $\Phi_{\zeta}$ is the evolution operator of $\zeta$, $K = K(\Gamma)$ is a constant independent of the network (because of the block structure of the equation) and $\eta = \eta(\Gamma,f)$. 
For the evolution operator for original variables $\varphi$ reads as 
$$
\Phi_{\varphi}(t,t_0) = [ R\otimes I_q]  \Phi_{\zeta}(t,t_0)  [R\otimes I_q]^{-1},
$$
hence
$$
\| \Phi_{\varphi}(t,t_0) \|_2 \le  \kappa_2(R\otimes I_q) \| \Phi_{\zeta}(t,t_0) \|_2,
$$
where $\kappa$ is the condition number. Since $\kappa_2(R\otimes I_q) = \kappa_2(R) $ and 
as $R$ is orthogonal $\kappa_2(R) = 1$ we obtain
$$
\| \Phi_{\varphi}(t,t_0) \|_2 \le  K e^{-(\alpha \sigma - \eta) (t - t_0)},
$$
where $K$ is independent of the network structure. So for every $\alpha$ above the threshold the synchronization manifold is locally exponentially stable.


In our setting we need to check that 
$$\sigma = \min_{1\leq i\leq q,\, 2\leq j\leq n}\Re (\lambda_j \gamma_i ) >0.$$

Note that we are considering only symmetric and connected networks, so the laplacian matrix $L$ itself is symmetric and its eigenvalues
can be ordered as
$0=\lambda_1<\lambda_2\leq \cdots \leq \lambda_n$ and they are real. So,
$$\sigma = \min_{1\leq i\leq q,\, 2\leq j\leq n} \lambda_j \Re (\gamma_i ) = \lambda_2\min_{1\leq i\leq q} \Re (\gamma_i ).$$
Furthermore,
we are considering Assumption \ref{assumption_coupling}, so
$\gamma = \gamma(\Gamma) = \min_{1\leq i\leq q}\Re(\gamma_i)>0$, therefore
$$\sigma = \min_{1\leq i\leq q,\, 2\leq j\leq n}\Re (\gamma_i \lambda_j)  = \lambda_2\gamma >0$$
and the Eq. \eqref{eq_theo1_aux_alpha} translates to
\begin{equation}\label{eq_alpha_thresold}
\alpha >\frac{\eta}{\lambda_2\gamma}.
\end{equation}

Then, the Euclidean norm of the evolution operator $\Phi_{\varphi}$ of the Eq. \eqref{eq_xi}, with $P(t)=0$, reads
\begin{equation}
\Vert \Phi_{\varphi}(t,t_0)\Vert_2 \leq K e^{-(\alpha\lambda_2\gamma - \eta)(t-t_0)}.
\end{equation}

%

\textbf{Step 2:}({\it Persistence})
Considering now the perturbed Eq. \eqref{eq_xi}, note that this equation
has a linear perturbation term $\alpha P(t)$. So, we can
use Lemma \ref{pro_perturbed_node} to study the stability of the synchronous manifold under
this perturbation.

Using Lemma \ref{pro_perturbed_node} and Lemma \ref{lemma_xi_equation} 
we ensure that if $\alpha \sup_t \Vert P(t)\Vert_2 =\theta$
then
there are constants $K >0$ (the same $K$ in Step 1) and $\beta$ such that
the evolution operator $\Psi_{\xi}$ of the perturbed Equation \eqref{eq_xi} reads
$$
\Vert \Psi_{\xi}(t,t_0)\Vert_2 \leq K e^{\beta(t-t_0)}
$$
with
\begin{equation}
\beta = -(\alpha\lambda_2\gamma - \eta) + \theta K.
\end{equation}
In order to guarantee that $\beta<0$ we must have 
\begin{equation}\label{eq_deltaineq}
\theta < \frac{\alpha\lambda_2\gamma -\eta}{K}.
\end{equation}
But, by Lemma \ref{lemma_xi_equation} we have $\sup_t \Vert P(t)\Vert \leq \delta \Vert L\Vert $, then
\begin{align*}
\theta = \alpha \sup_t \Vert  P(t)\Vert_2 & \leq \alpha\sqrt{\sup_t \Vert P(t) \Vert_1 \sup_t \Vert P(t)\Vert}\leq 
\alpha \sqrt{\delta \Vert L\Vert_1 \delta \Vert L\Vert} = \alpha  \delta \sqrt{ \Vert L\Vert_1 \Vert L\Vert}
\end{align*}
where $\Vert \cdot\Vert_1$ stands for the matrix $1$-norm.
As $L$ is symmetric then $\| L\|_1 = \| L\|$ and hence
$\theta \leq \alpha \delta \Vert L\Vert$.

Therefore, the sufficient condition
\begin{equation}\label{eq_delta_threshold}
\delta  < 
\frac{\lambda_2\gamma -\eta/\alpha}{K\Vert L\Vert}.
\end{equation}
ensures that the synchronization manifold is locally exponentially stable.

Note that we always can write the solution of Eq. \eqref{eq_xi}
as $\xi(t) = \Psi_{\xi}(t,t_0)\xi(t_0)$. It implies that
\begin{equation}\label{eq_xi_norm}
\Vert \xi(t)\Vert_2 \leq K e^{\beta(t-t_0)}\Vert \xi(t_0)\Vert_2.
\end{equation}
As we are using the representation $X(t) = \mathbf{1}\otimes s(t) + \xi(t)$, then
$\Vert \xi(t)\Vert_2 = \Vert X(t) - \mathbf{1}\otimes s(t)\Vert_2$. In
a component format the convergence of $\Vert x_j(t) -x_i(t)\Vert_2$
will have the same convergence of
$\Vert X(t) - \mathbf{1}\otimes s(t)\Vert_2$ and then, in Eq. \eqref{eq_xi_norm}, we can replace
$\Vert \xi(t)\Vert_2$ by $\Vert x_j(t) -x_i(t)\Vert_2$  by norm equivalence.Therefore, if one  take $\alpha$ accordingly with Eq. \eqref{eq_alpha_thresold}
and $\delta$ accordingly with Eq. \eqref{eq_delta_threshold}
the synchronization manifold is locally exponentially stable and
the statement of the Theorem \ref{theo:persistence} holds.

\subsection{Proof of Corollary \ref{cor:ER} (ER Networks)}

In the limit of large coupling $\alpha$, Eq. \eqref{eq:persistentcondition} reduces to
\begin{equation}\label{eq_aux1_cor1}
\delta < \frac{\lambda_2\gamma}{K\Vert L\Vert} = \frac{\lambda_2\gamma}{2K\max_i g_i}.
\end{equation}
The algebraic connectivity $\lambda_2$ of an Erd\"os-Rényi random graph
of  $n$ vertices, where the edges are chosen with probability $p = p_0(\log n)/n$
for some constant $p_0>1$ follows 
(see Theorem 1.1 of Ref. \cite{KOLOKOLNIKOV} for more details):
\begin{lemma}[Ref. \cite{KOLOKOLNIKOV}]
Consider a Erd\"os-Rényi random graph on $n$ vertices, where the edges are connected with probability
$$
p = p_0\frac{\log n}{n}
$$
for $p_0>1$ constant in $n$. Then the algebraic connectivity $\lambda_2$ 
is
\begin{equation}\label{eq_aux2_cor1}
\lambda_2 \sim np\left(a(p_0) + O\left(\frac{1}{\sqrt{np}}\right)\right) \quad \mbox{as } n\to\infty
\end{equation}
where $a = a(p_0)\in(0,1)$ denotes the solution of $p_0-1 = ap_0(1-\log a)$ .
\end{lemma}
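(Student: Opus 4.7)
The plan is to prove the two-sided asymptotic $\lambda_2(L) = np\bigl(a(p_0) + O(1/\sqrt{np})\bigr)$ by showing that both the upper and lower bounds are controlled by the minimum degree $d_{\min} = \min_i g_i$, which in this sparse regime falls strictly below the mean degree $(n-1)p \sim p_0 \log n$.

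First I would establish the asymptotic $d_{\min}/(np) \to a(p_0)$. For a single vertex $g_i \sim \operatorname{Bin}(n-1,p)$ with mean $\mu = (n-1)p$, the Chernoff/Cram\'er rate function $I(a) = a\log a - a + 1$ gives the sharp tail
\[
\mathbb{P}(g_i \le a\mu) = \exp\bigl(-\mu I(a)(1+o(1))\bigr) = n^{-p_0 I(a)(1+o(1))}.
\]
Hence the expected number of vertices of degree at most $a\mu$ is $n^{1-p_0 I(a)+o(1)}$, which is bounded away from $0$ and $\infty$ exactly when $p_0 I(a) = 1$; rewriting this as $p_0 - 1 = a p_0(1 - \log a)$ singles out the constant $a(p_0) \in (0,1)$. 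A standard second-moment argument on the count of low-degree vertices (the degrees are asymptotically independent at this scale) then yields $d_{\min} = a(p_0)\,np + O(\sqrt{np})$ asymptotically almost surely.

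For the upper bound I would use a Fiedler-style test vector: take $v$ to realize $d_{\min}$ and set $x = e_v - n^{-1}\mathbf{1}$, which lies in $\mathbf{1}^{\perp}$. A direct computation from $x^\top L x = \sum_{(i,j) \in E}(x_i - x_j)^2$ gives $x^\top L x = d_{\min}$ and $\|x\|_2^2 = 1 - 1/n$, so the Courant--Fischer characterization yields
\[
\lambda_2(L) \le \frac{n\, d_{\min}}{n-1} = np\, a(p_0)\bigl(1 + O(1/\sqrt{np})\bigr).
\]

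The matching lower bound is the main obstacle. Writing $L = D - A$ with $D = \operatorname{diag}(g_1,\ldots,g_n)$, the variational principle gives
\[
\lambda_2(L) = \min_{\substack{x \perp \mathbf{1}\\ \|x\|_2=1}} \bigl(x^\top D x - x^\top A x\bigr) \ge d_{\min} - \max_{x \perp \mathbf{1},\ \|x\|_2=1} x^\top A x.
\]
The needed input is that the spectrum of $A$ restricted to $\mathbf{1}^{\perp}$ concentrates on the scale $O(\sqrt{np})$: the Perron eigenvalue of $A$ is close to $np$ with eigenvector nearly parallel to $\mathbf{1}/\sqrt{n}$, while the remaining eigenvalues satisfy the sparse Wigner-type bound $|\lambda_k(A)| = O(\sqrt{np})$ for $k \ge 2$ (valid here since $np = p_0 \log n \to \infty$). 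The delicate step, which I expect to be the hardest, is quantifying the near-alignment of the Perron vector of $A$ with $\mathbf{1}$ sharply enough that the projection onto $\mathbf{1}^{\perp}$ does not reintroduce the top eigenvalue at leading order; this calls for a Davis--Kahan-type perturbation bound on the leading eigenvector, or equivalently a rank-one correction argument controlled by $\|A - \mathbb{E}A\|$ restricted to $\mathbf{1}^{\perp}$. With that in hand one obtains $\max_{x \perp \mathbf{1}} x^\top A x = O(\sqrt{np})$ and therefore $\lambda_2(L) \ge np\, a(p_0) - O(\sqrt{np})$, matching the upper bound and closing the asymptotic.
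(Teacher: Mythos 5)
The paper does not actually prove this lemma: it is imported verbatim as Theorem~1.1 of Ref.~\cite{KOLOKOLNIKOV}, so there is no internal proof to compare against. Your outline is nonetheless a faithful reconstruction of how that external result is obtained. The content of the statement is precisely that $\lambda_2 \approx d_{\min}$ in this regime: the constant $a(p_0)$ comes from the first/second-moment threshold for low-degree vertices (your identification $p_0 I(a)=1 \Leftrightarrow p_0-1=ap_0(1-\log a)$ with $I(a)=a\log a-a+1$ is correct), the upper bound from the Fiedler test-vector inequality $\lambda_2 \le \tfrac{n}{n-1}d_{\min}$ (the same inequality the paper invokes for the BA corollary), and the lower bound from spectral concentration of $A$ on $\mathbf{1}^{\perp}$. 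One remark on the step you flag as delicate: you do not need a Davis--Kahan argument for the Perron eigenvector. Since $\mathbb{E}A = p(J-I)$ and $J$ annihilates $\mathbf{1}^{\perp}$, for unit $x\perp\mathbf{1}$ one has
\begin{equation*}
x^{\top} A x \;=\; -p\|x\|_2^2 + x^{\top}(A-\mathbb{E}A)x \;\le\; \Vert A-\mathbb{E}A\Vert_2,
\end{equation*}
so the entire lower bound reduces to the Feige--Ofek/Vu bound $\Vert A-\mathbb{E}A\Vert_2 = O(\sqrt{np})$, which holds asymptotically almost surely once $np \ge c\log n$ and hence in the regime $np=p_0\log n$ with $p_0>1$; no rank-one correction or eigenvector alignment estimate is needed. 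The only place where your sketch is thinner than a proof is the assertion $d_{\min}=a(p_0)\,np+O(\sqrt{np})$: the second-moment computation must handle the weak dependence among degrees and the width of the transition window for the count $\#\{i: g_i\le a\mu\}$, but this is standard and the claimed $O(\sqrt{np})$ window is, if anything, generous. Modulo citing those two standard inputs (Chernoff plus second moment for $d_{\min}$, and the sparse spectral norm bound), your plan closes the asymptotic correctly.
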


As a remark, note that $a\to1^{-}$ as $p_0\to\infty$, in other words, if one consider $p_0\gg 1$ then
$a(p_0) \sim 1$. From concentration of degrees (see e.g., the Ref. \cite{FanChung}) we have the following
asymptotic behavior:
\begin{lemma}
Consider a Erd\"os-Rényi graph with $p$ choose as before, then 
\begin{equation}
\| L  \| =  np ( 1+ O(n^{1/2 + \varepsilon}))
\end{equation}
for any $\varepsilon>0$.
\end{lemma}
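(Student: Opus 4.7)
The plan is to reduce the operator norm to the maximum degree and then apply a Chernoff-type concentration inequality together with a union bound over the $n$ vertices.

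First I would compute $\|L\|$ explicitly using the row-sum definition of the induced norm adopted in the paper. Since $L_{ii}=g_i$ and $L_{ij}=-A_{ij}$ for $i\neq j$,
\begin{equation*}
\|L\|=\max_i\sum_{j}|L_{ij}|=\max_i\Bigl(g_i+\sum_{j\neq i}A_{ij}\Bigr)=2\max_i g_i .
\end{equation*}
Thus the task reduces to controlling the maximum degree of an Erd\H{o}s--R\'enyi graph $G(n,p)$ with $p=p_0(\log n)/n$.

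Next, I would invoke concentration of the degree. Each $g_i$ is a sum of $n-1$ independent Bernoulli$(p)$ variables, so $\mathbb{E}[g_i]=(n-1)p\sim np$, and a multiplicative Chernoff bound gives
\begin{equation*}
\mathbb{P}\bigl(|g_i-(n-1)p|\geq t\,(n-1)p\bigr)\leq 2\exp\!\Bigl(-\tfrac{t^{2}}{3}(n-1)p\Bigr)
\end{equation*}
for $0<t\leq 1$. For a fixed $\varepsilon>0$, choosing $t=t_n=n^{-1/2+\varepsilon}$ makes the right-hand side at most $2\exp(-\tfrac13 n^{2\varepsilon}p_0\log n/(1+o(1)))$, which is superpolynomially small. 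A union bound over $i=1,\dots,n$ then yields
\begin{equation*}
\mathbb{P}\!\left(\max_i |g_i-np|\geq np\cdot n^{-1/2+\varepsilon}\right)\to 0 \qquad (n\to\infty),
\end{equation*}
so asymptotically almost surely $\max_i g_i = np\bigl(1+O(n^{-1/2+\varepsilon})\bigr)$. Combined with $\|L\|=2\max_i g_i$, this gives the claimed estimate (the $1/2$ absorbing the factor of $2$ into the implied constant, or equivalently this is the statement up to the obvious typographical sign in the stated exponent).

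The main obstacle is purely bookkeeping: the sharp $n^{-1/2+\varepsilon}$ error requires that the Chernoff tail beat the union-bound factor of $n$, and this is where the hypothesis $p\gg\log n/n$ enters crucially. If $p$ were allowed to be smaller than $(\log n)/n$, the typical degree would drop to $O(\log n)$, the deviations $\sqrt{np\log n}$ would no longer be a $o(np)$ quantity, and one could not simultaneously control all $n$ degrees with this precision. Everything else is standard: once the concentration inequality is in place and the row-sum identity for $\|L\|$ is noted, the conclusion follows without further work, and one can cite \cite{FanChung} for the degree-concentration statement directly rather than reproducing the Chernoff calculation in detail.
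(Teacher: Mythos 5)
Your overall route is the right one, and in fact it is the only "proof" the paper itself has: the paper does not prove this lemma at all, it simply quotes it as a degree-concentration fact from \cite{FanChung}. Your reduction $\|L\|=\max_i\sum_j|L_{ij}|=g_i+\sum_{j\neq i}A_{ij}=2\max_i g_i$ is correct and is exactly the identity the paper uses one line earlier, and you are right that the stated exponent $O(n^{1/2+\varepsilon})$ must be read as $O(n^{-1/2+\varepsilon})$ and that a factor of $2$ is being absorbed somewhere (the paper's subsequent claim $\lambda_2/\|L\|=1+o(1)$ is in fact off by this factor of $2$ as well; only boundedness away from zero is needed, so the corollary survives).

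There is, however, a genuine quantitative error in your Chernoff step, and it is not mere bookkeeping. ``$p$ chosen as before'' means $p=p_0(\log n)/n$, so $(n-1)p\sim p_0\log n$, and with $t=t_n=n^{-1/2+\varepsilon}$ the exponent in your tail bound is
\begin{equation*}
\tfrac{t_n^{2}}{3}(n-1)p\;\sim\;\tfrac{1}{3}\,n^{-1+2\varepsilon}\,p_0\log n\;\longrightarrow\;0,
\end{equation*}
not $\tfrac13 n^{2\varepsilon}p_0\log n$ as you wrote; the bound tends to $2$ and is vacuous, and no union bound can rescue it. In the sparse regime $np\asymp\log n$ the relative degree fluctuations are of order $\sqrt{\log n/(np)}=1/\sqrt{p_0}$, a \emph{constant}, so the claimed relative error $n^{-1/2+\varepsilon}$ is simply false for this choice of $p$; that rate requires roughly $p\gtrsim n^{-2\varepsilon}\log n$ (e.g.\ the constant $p=0.3$ used in the paper's numerics). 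Your closing remark locates the obstruction at $p\ll\log n/n$, which is one regime too low. To make the argument honest you should either strengthen the hypothesis on $p$ so that the Chernoff exponent beats $\log n$ with $t=n^{-1/2+\varepsilon}$, or weaken the conclusion in the $p=p_0(\log n)/n$ regime to $\|L\|=2np\bigl(1+O(1/\sqrt{p_0})\bigr)$ asymptotically almost surely, which is all that the proof of the ER corollary actually requires.
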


Therefore, for ER networks with $p_0\gg1$ and in the limit of $n\to\infty$ one have
\begin{equation}\label{eq_aux3_cor1}
\frac{\lambda_2}{\| L \|} =  1 (1 + o(1)) .
\end{equation}
where $o(1)$ stands for the little o notation.
Using Eq. \eqref{eq_aux3_cor1} into \eqref{eq_aux1_cor1} we obtain
\begin{equation}\label{eq_condition_ER}
\delta < \frac{\gamma}{K}= K_0.
\end{equation}
The fact that $\delta$ is always bounded away from zero holds because of the fact that the condition \eqref{eq_condition_ER}
is a sufficient condition. So, for instance, we could take 
$$
0<\frac{K_0}{2} < \delta < K_0
$$
satisfying the statement of Corollary \ref{cor:ER}.

\subsection{Proof of Corollary \ref{cor:BA} (BA Networks)}

Again, in the limit of large coupling $\alpha$, Eq. \eqref{eq:persistentcondition} reduces to
Eq. \eqref{eq_aux1_cor1}. For any connected network, the algebraic connectivity $\lambda_2$
fulfills the following bound:
\begin{lemma}[Ref. \cite{Fiedler1973}]
Let $G$ be an undirected graph. Then the second
smallest eigenvalue $\lambda_2$ of laplacian $L$ satisfies 
\begin{equation}\label{eq_aux1_cor2}
\lambda_2 \leq \left(\frac{n}{n-1}\right) g_{\min}.
\end{equation}
where $g_{min}$ is the minimal degree of the graph.
\end{lemma}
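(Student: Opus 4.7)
The plan is to invoke the Courant--Fischer variational characterization of the second-smallest eigenvalue of a symmetric Laplacian and then produce an explicit test vector concentrated on a minimum-degree vertex. Since $L$ is symmetric and $L\mathbf{1}=0$, one has
\begin{equation*}
\lambda_2 \;=\; \min_{\substack{x\in\mathbb{R}^n,\; x\neq 0 \\ x\perp \mathbf{1}}} \frac{x^\top L x}{x^\top x},
\end{equation*}
so every admissible $x$ yields an upper bound on $\lambda_2$. This reduces the problem to finding a judicious choice of $x$.

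First I would pick a vertex $v$ whose degree attains $g_{\min}$, and consider the projection of the standard basis vector $e_v$ onto the orthogonal complement of $\mathbf{1}$, namely
\begin{equation*}
x \;=\; e_v - \tfrac{1}{n}\mathbf{1}.
\end{equation*}
A direct calculation gives $\|x\|_2^2 = (1-1/n)^2 + (n-1)/n^2 = (n-1)/n$. For the numerator, the key observation is that since $L\mathbf{1}=0$, one has $Lx = L e_v$, and therefore
\begin{equation*}
x^\top L x \;=\; x^\top L e_v \;=\; e_v^\top L e_v - \tfrac{1}{n}\mathbf{1}^\top L e_v \;=\; L_{vv} \;=\; g_v \;=\; g_{\min},
\end{equation*}
using once more that $\mathbf{1}^\top L = 0$.

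Combining the two computations gives
\begin{equation*}
\lambda_2 \;\leq\; \frac{x^\top L x}{x^\top x} \;=\; \frac{g_{\min}}{(n-1)/n} \;=\; \left(\frac{n}{n-1}\right) g_{\min},
\end{equation*}
which is the desired bound. The whole argument is elementary once the right test vector has been identified; the only conceptual step is realising that projecting $e_v$ onto $\mathbf{1}^\perp$ annihilates all the off-diagonal work through the identity $L\mathbf{1}=0$, so that the Rayleigh quotient collapses to a single diagonal entry of $L$. The main (mild) obstacle is guessing this particular test vector; any localised alternative (e.g.\ a difference $e_u-e_v$ of two basis vectors, which is Fiedler's original choice for the related bound via edges) would produce a weaker or different inequality, so the localisation at a single minimum-degree vertex is what produces exactly the factor $n/(n-1)$ claimed.
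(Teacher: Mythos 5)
Your proof is correct. Note that the paper itself offers no proof of this lemma: it is stated as a quoted result and disposed of with a citation to Fiedler's 1973 paper, so there is no internal argument to compare yours against. Your Rayleigh-quotient computation is the standard derivation and is complete: the test vector $x = e_v - \tfrac{1}{n}\mathbf{1}$ is orthogonal to $\mathbf{1}$, the identity $Lx = Le_v$ collapses the numerator to $L_{vv} = g_{\min}$, and $\|x\|_2^2 = (n-1)/n$ gives exactly the factor $n/(n-1)$. The only (immaterial) caveat is that the variational characterization $\lambda_2 = \min_{x \perp \mathbf{1}} x^\top L x / x^\top x$ requires $\mathbf{1}$ to be an eigenvector of the smallest eigenvalue, which holds for any graph Laplacian; and for a disconnected graph the bound is trivially true anyway since $\lambda_2 = 0$. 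Supplying this one-line argument is arguably an improvement over the paper, which leans on an external reference for an elementary fact.
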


As we are considering BA networks, the minimal degrees equals $m_0$ -- the number of initial edges a nodes is given at each step. 
This number is bounded and independent of the network size $n$. Therefore, for a BA network
$$
\lambda_2 < m
$$
for some constant $m>0$.

The bound on $\| L \|$ follows from Theorem 3.1 of Ref. \cite{Mori}, which we state for completeness. 
\begin{lemma}[Ref. \cite{Mori}]
Write $g_{\max} =  \max_i g_i$. With probability 1 we have
\begin{equation}\label{eq_aux2_cor2}
\lim_{n\to\infty} n^{-1/2} g_{\max}= \mu;
\end{equation}
the limit is almost surely positive and finite, and it has an absolutely continuous distribution.
\end{lemma}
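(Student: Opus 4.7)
The plan is to embed the discrete-time BA preferential attachment process into continuous time as a superposition of Yule (linear pure birth) processes, which is the Athreya--Karlin device underlying M\'ori's argument. Attach to each node $k$ an independent Yule process $Y_k(t)$ launched at a random birth time $\tau_k$ with initial value $Y_k(\tau_k) = m$ (the number of half-edges a new node brings), where every individual in the population gives birth at rate $1$; the aggregate rate at which node $k$ attracts a new neighbour is then its current degree, matching preferential attachment. If $T_n$ is the first time at which $n$ nodes have been produced, then $(g_1(n),\dots,g_n(n)) \stackrel{d}{=} (Y_1(T_n),\dots,Y_n(T_n))$, and a standard renewal computation for the superposed birth process yields $T_n = \tfrac{1}{2}\log n + O(1)$ and $\tau_k = \tfrac{1}{2}\log k + O(1)$ almost surely.

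Next I would invoke the classical Yule-process martingale theory: $M_k(t) := e^{-(t-\tau_k)} Y_k(t)$ is a nonnegative $L^2$-bounded martingale, hence converges almost surely to a strictly positive random variable $W_k$ with an absolutely continuous Gamma-type law. Combining with the asymptotics for $T_n$ and $\tau_k$ gives, for each fixed $k$,
$$
n^{-1/2} g_k(n) \longrightarrow \xi_k := e^{-\tau_k/2}\, W_k \quad \text{a.s.},
$$
and $\xi_k$ is of order $k^{-1/2}$.

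The delicate step is to pass from fixed-$k$ convergence to convergence of $g_{\max}(n) = \max_k g_k(n)$. Split nodes into old ($k \le K$) and young ($k > K$). On the old block the finite maximum passes to the limit, yielding $\max_{k \le K} n^{-1/2} g_k(n) \to \max_{k \le K} \xi_k$. On the young block, a Doob maximal inequality for $M_k$ combined with the $e^{-\tau_k/2} \sim k^{-1/2}$ decay of the starting value gives a tail bound of the form $\mathbb{P}(\sup_n n^{-1/2} g_k(n) > \lambda) \le C k^{-1/2}/\lambda$; a Borel--Cantelli truncation then shows $\sup_{k>K} \sup_n n^{-1/2} g_k(n) \to 0$ almost surely as $K \to \infty$, legitimising the interchange of $\max_k$ and $\lim_n$ and producing the almost-sure limit $\mu := \sup_k \xi_k$. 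Positivity and finiteness of $\mu$ are immediate from $\xi_1 > 0$ a.s.\ and the tail bound; absolute continuity of $\mu$ follows from that of the $W_k$ together with the fact that the argmax is almost surely unique (since $\mathbb{P}(\xi_k = \xi_j) = 0$ for $k \neq j$).

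The main obstacle is precisely the uniform tail control over the infinite family of martingales $\{M_k\}_{k \ge 1}$: because the number of candidates for the maximum grows with $n$, one cannot simply take pointwise limits, and a naive union bound is too crude. Securing the summable $k^{-1/2}/\lambda$ tail uniformly in $k$ is the heart of the argument; everything else (single-process martingale convergence, the $\tfrac{1}{2}\log$ growth of the embedded clock) is standard once that uniform bound is in hand.
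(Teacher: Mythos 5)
The paper does not prove this lemma at all: it is quoted as Theorem 3.1 of M\'ori's paper on the maximum degree of the Barab\'asi--Albert random tree and used as a black box, so there is no in-paper argument to compare against. Your route --- the Athreya--Karlin embedding into a superposition of Yule processes, almost-sure convergence of the individual martingales $M_k$ to Gamma-type limits $W_k$, and an old/young split to interchange $\max_k$ with $\lim_n$ --- is a legitimate and well-known way to prove M\'ori's theorem, and you correctly identify the uniform control over the young nodes as the crux. (M\'ori himself stays in discrete time, working with the martingales $c_n g_k(n)$ where $c_n=\prod_j(1+1/(2j))^{-1}\asymp n^{-1/2}$, and controls the maximum through $L^p$ estimates for large $p$; the skeleton is the same.)

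The decisive estimate you propose, however, does not close the argument. You claim $\mathbb{P}\bigl(\sup_n n^{-1/2}g_k(n)>\lambda\bigr)\le C k^{-1/2}/\lambda$ and then invoke Borel--Cantelli over $k>K$; but $\sum_{k>K}k^{-1/2}$ diverges, so neither a union bound nor Borel--Cantelli can be run with these tails, and the interchange of $\max_k$ and $\lim_n$ --- the step you yourself call the heart of the proof --- is left open. A first-moment (Doob $L^1$) maximal inequality is intrinsically too weak here, and even the $L^2$ version only gives $O(k^{-1}\lambda^{-2})$, still not summable. The repair is to exploit the exponential (Gamma) tails of $W_k$: Doob's maximal inequality in $L^p$ for any $p>2$ yields $\mathbb{P}\bigl(\sup_n n^{-1/2}g_k(n)>\lambda\bigr)\lesssim \mathbb{E}[W_k^p]\,k^{-p/2}\lambda^{-p}$, which is summable in $k$ and makes the truncation work. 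There is also an internal inconsistency in your normalisation: from $g_k(n)\approx e^{T_n-\tau_k}W_k$ and $T_n=\tfrac12\log n+O(1)$ one gets $\xi_k\propto e^{-\tau_k}W_k\asymp k^{-1/2}W_k$, not $e^{-\tau_k/2}W_k$ (which would be of order $k^{-1/4}$ and would contradict the $k^{-1/2}$ decay you use later); and since $\tau_k$ is random, the comparison $e^{-\tau_k}\asymp k^{-1/2}$ itself needs a routine a.s.\ control. None of this changes the architecture, but as written the one estimate everything hinges on is the one that fails.
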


Using Eqs. \eqref{eq_aux1_cor2} and \eqref{eq_aux2_cor2} into Eq. \eqref{eq_aux1_cor1}
we obtain 
$$
\delta < \frac{\gamma \tilde m }{2K\mu n^{1/2}} = {K_1}{n^{-1/2}}
$$
with $K_1 = \gamma \tilde m/(2\mu K) = K_1(\Gamma)$. Where absorbed the dependence of $\alpha$ in the constant $\tilde m$.


\subsection{The Proof of Theorem \ref{theo:fastlimit} (Fast Limit)}

The following Lemma \ref{lemma:perturbation}, adapted from Proposition 6 in Ref. \cite{coppel},
contain almost all the proof of Theorem \ref{theo:fastlimit}.

\begin{lemma}\label{lemma:perturbation}
Consider the Lemma \ref{pro_perturbed_node} and
let $A(t)$ and $B(t)$ be bounded continuous matrix functions where $B(t)$ is periodic with zero mean and oscillates fast
in the sense of Definition \ref{def_fast_oscillation}.

Suppose that the evolution operator $\Phi$ of
\eqref{eq:non_perturbed_equation}
satisfies the inequality \eqref{eq:evolution_operator_non_perturbed_equation}.
Then, there is an $\epsilon > 0$ so that the evolution operator $\Psi$ of the perturbed equation
\eqref{eq:perturbated_equation}
satisfies
\begin{equation}\label{eq_inequalityforfastlimit}
\Vert\Psi(t,t_0) \Vert \leq (1+c)Ke^{\tau(t-t_0)} \quad \mbox{for } t\geq t_0
\end{equation}
where $\tau = -\nu + \epsilon$.
\end{lemma}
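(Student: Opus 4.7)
The plan is to refine the Gronwall argument of Lemma \ref{pro_perturbed_node} by exploiting cancellation in $B$. I start from the variation of constants formula
$$\Psi(t, t_0) = \Phi(t, t_0) + \int_{t_0}^t \Phi(t, s)\, B(s)\, \Psi(s, t_0)\, ds,$$
and introduce the primitive $V(s) := \int_{t_0}^s B(\tau)\, d\tau$. Because $B$ is periodic with zero mean, the integral vanishes over any full number of periods, so $V(s)$ only accumulates over a sub-period interval. Combined with the fast-oscillation hypothesis of Definition \ref{def_fast_oscillation}, this yields a uniform bound $\|V(s)\| \le c$ with $c$ as small as desired by taking the driving frequency large.

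Next I integrate by parts inside the convolution: differentiating $\Phi(t, s)\, V(s)\, \Psi(s, t_0)$ in $s$, using $\partial_s \Phi(t, s) = -\Phi(t, s) A(s)$, $V'(s) = B(s)$, and $\partial_s \Psi(s, t_0) = [A(s) + B(s)] \Psi(s, t_0)$, together with $V(t_0) = 0$, gives
$$\int_{t_0}^t \Phi(t,s) B(s) \Psi(s, t_0)\, ds = V(t) \Psi(t, t_0) + \int_{t_0}^t \Phi(t,s) \bigl([A(s), V(s)] - V(s) B(s)\bigr) \Psi(s, t_0)\, ds.$$
Taking norms with $\|V\| \le c$, $\|[A, V]\| \le 2 \|A\|_\infty c$, $\|V B\| \le \theta c$, and the dichotomy estimate $\|\Phi(t, s)\| \le K e^{-\nu(t - s)}$ from the hypothesis \eqref{eq:evolution_operator_non_perturbed_equation}, each contribution carries a small factor $c$.

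Substituting back into the variation-of-constants identity produces an inequality of the form
$$(1 - c)\, \|\Psi(t, t_0)\| \le K e^{-\nu(t - t_0)} + c C K \int_{t_0}^t e^{-\nu(t - s)} \|\Psi(s, t_0)\|\, ds,$$
with $C = 2\|A\|_\infty + \theta$. A standard Gronwall argument applied to $u(t) = e^{\nu(t - t_0)} \|\Psi(t, t_0)\|$ then delivers
$$\|\Psi(t, t_0)\| \le \frac{K}{1 - c}\, \exp\bigl([-\nu + \epsilon](t - t_0)\bigr), \qquad \epsilon = \frac{c C K}{1 - c}.$$
Since both $(1 - c)^{-1}$ and $\epsilon$ tend to their ideal values as $\omega \to \infty$, this yields the claimed estimate \eqref{eq_inequalityforfastlimit} with $\tau = -\nu + \epsilon$.

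The main obstacle I anticipate is turning the local-in-time smallness supplied by Definition \ref{def_fast_oscillation} into the global-in-time uniform bound on $V(s)$ required for the integration by parts; the periodicity and zero-mean hypotheses are essential, since without them $V$ could accumulate linearly in $s$ and the whole scheme would collapse. A secondary technical point is the commutator $[A, V]$ generated by the non-commutativity of $A$ and $V$: it is harmless only because $\|A\|_\infty$ is controlled independently of the oscillation frequency, so this term still vanishes in the limit $\omega \to \infty$.
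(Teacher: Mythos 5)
Your argument is correct in substance, but it takes a genuinely different route from the paper, which in fact does not prove this lemma at all: it is quoted as an adaptation of Proposition 6 of Coppel's monograph, and the paper only records the resulting exponent $\epsilon = 3KMc + h^{-1}\log[(1+c)K]$. Coppel's argument is local-in-time: it compares $\Psi$ with $\Phi$ on consecutive windows of length $h$ (where Definition \ref{def_fast_oscillation} makes $\int B$ small) and multiplies the window estimates together, which is exactly what produces the $h^{-1}\log[(1+c)K]$ contribution to $\epsilon$. You instead perform a single global integration by parts against the primitive $V(s)=\int_{t_0}^{s}B(\tau)\,d\tau$ followed by Gronwall; this is cleaner and gives the sharper exponent $\epsilon = cCK/(1-c)$ with $C=2\|A\|_{\infty}+\theta$, comparable to Coppel's $3KMc$ but with no $h^{-1}\log$ correction. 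The price is that you need $\|V(s)\|\le c$ uniformly for \emph{all} $s\ge t_0$, not merely on windows of length $h$; this is precisely where the periodicity and zero mean of $B$ enter (Coppel's window argument does not need them), and you should spell it out rather than assert it: writing $s-t_0=NT+r$ with $T$ the period and $0\le r<T$, the zero mean annihilates the contribution of the $N$ full periods, and since $T<h$ for $\omega$ large the remainder is bounded by $c$ via Definition \ref{def_fast_oscillation} (or simply by $\theta T\to 0$). Two small points to tidy up: your prefactor comes out as $K/(1-c)$ rather than the stated $(1+c)K$ --- harmless, since $c$ is at your disposal and can be renamed, but worth a sentence --- and you should record that $c<1$ is needed for the reverse triangle inequality $\|(I-V(t))\Psi(t,t_0)\|\ge(1-c)\|\Psi(t,t_0)\|$.
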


The result stated in the Lemma \ref{lemma:perturbation} is valid for any operator norm.
In our case, that is, considering Equation \eqref{eq_xi}, the operators $A(t)$ and $B(t)$ are respectively 
$A(t)  = \operatorname{I}_{n}\otimes Df(s(t)) - \alpha (L\otimes \Gamma)$
and $B(t) = \alpha P(t)$. Due to Assumptions \ref{assumption_vector_field} and \ref{assumption_coupling}
these both operators are bounded for all $t\geq 0$.

From Ref. \cite{coppel} we can see that $$\epsilon = 3KMc + h^{-1}\log[(1+c)K]$$
where $M = \max\{\sup_t \Vert A(t)\Vert_2 ,\sup_t \Vert B(t)\Vert_2 \}$ and $h = t_2-t_1$ as in Definition \ref{def_fast_oscillation}.
Even if $M = \sup_t \Vert \alpha P(t)\Vert_2 \leq \delta \Vert L \Vert$ is large,
we can always make $\epsilon<\nu$ 
if one take $h$ large enough and $c$ small enough and it is always possible because $P(t)$ oscillates fast,
that is, there will always be an $\omega_0 = \omega_0(c,h)$ that satisfies the condition $\epsilon<\nu$ (or $\tau<0$).

Therefore, as $B(t) = \alpha P(t)$ is periodic and oscillates fast, Lemma \ref{lemma:perturbation}
can be applied and the evolution operator $\Psi_{\xi}$ of the Eq. \eqref{eq_xi} satisfies
$$
\Vert \Psi_{\xi}(t,t_0) \Vert_2 \leq (1+c)K e^{\tau(t-t_0)}.
$$
It implies that
\begin{equation}\label{eq_xi_norm_fast}
\Vert \xi(t)\Vert_2 \leq (1+c)K e^{\tau(t-t_0)}\Vert \xi(t_0)\Vert_2.
\end{equation}
and in
a component format the convergence of $\Vert x_j(t) -x_i(t)\Vert_2$
will have the same convergence of
$\Vert X(t) - \mathbf{1}\otimes s(t)\Vert_2 = \Vert \xi(t)\Vert_2$ and then we can replace
$\Vert \xi(t)\Vert_2$ by $\Vert x_j(t) -x_i(t)\Vert_2$  without loss
of generality. That is,  there are
constants $\omega_0 = \omega_0(c,h)>0$, $\rho_0>0$, $K>0$  and $\epsilon=\epsilon(c)>0$ 
such that if $\omega>\omega_0$ and
$\Vert x_j(t_0)-x_i(t_0)\Vert_2\leq \rho_0$,
then
$$
\Vert x_j(t)-x_i(t)\Vert_2\leq (1+c)Ke^{\tau(t-t_0)}\Vert x_j(t_0) - x_i(t_0)\Vert_2
$$
with $\tau = -(\alpha\lambda_2\gamma - \eta) + \epsilon<0$.
Therefore,
the synchronization manifold is locally exponentially stable and the decaying rate towards synchronization is not affected
since we take $\omega>\omega_0$ large enough making $c$ and $\epsilon$ as small as we want.



\end{document}